\documentclass{amsproc}

\newtheorem{theorem}{Theorem}[section]
\newtheorem{lemma}[theorem]{Lemma}
\newtheorem{conjecture}[theorem]{Conjecture}
\newtheorem{corollary}[theorem]{Corollary}

\theoremstyle{definition}
\newtheorem{definition}[theorem]{Definition}

\theoremstyle{remark}

\numberwithin{equation}{section}

\usepackage{amssymb, xcolor}
\usepackage[
backend=biber,
style=alphabetic,
sorting=ynt,
doi=false,
isbn=false,
url=false,
eprint=false
]{biblatex}
\begin{document}

\title{On Some Lie Group Invariant noncommutative functions}


\author{Jiachen Lu}
\address{}
\curraddr{}
\email{}
\thanks{}

\subjclass[2010]{Primary }

\keywords{Noncommutative functions, invariant theory}

\date{}

\dedicatory{}

\begin{abstract}
We study noncommutative polynomials and functions invariant under linear actions of infinite (compact Lie) groups, with a focus on $SU(2)$ acting on two noncommuting variables. For $SU(2)$, we give a complete description of the invariant noncommutative polynomials: there are no invariants of odd degree, and in degree $2n$ the invariants are exactly the span of the $S_{2n}$-orbit of $(XY-YX)^n$. We also construct an explicit basis recursively from the operation $(A,B)\mapsto XAYB-YAXB$, and show that the space of degree-$2n$ invariants has dimension equal to the $n$-th Catalan number. All proofs proceed by direct computation with the Haar integral.
\end{abstract}

\maketitle

\section{Introduction}
\subsection{Noncommutative functions and invariant problem}
        Noncommutative function theory sits at the crossroads of complex analysis, operator theory, and algebra: it studies functions of noncommuting variables, typically evaluated on tuples of matrices or operators. Joseph Taylor initiated the subject through noncommutative spectral theory, developing a difference-differential functional calculus for noncommutative functions \cite{Taylor1,Taylor2,Taylor3}.

        Since then the theory has been developed from several directions and linked to a range of fields. In free probability, Voiculescu introduced the difference quotient operator via his generalized difference quotient (GDQ) ring. In systems and control theory, the same objects arise through Kleene's automata-theoretic framework \cite{system2}. On the purely algebraic side, Rowen gave a detailed treatment of associative noncommutative rings \cite{alg1}, and Cohn studied the skew field of rational noncommutative functions \cite{alg2}. In the theory of ODEs in noncommuting variables, Abduvalieva and Kaliuzhnyi-Verbovetskyi used noncommutative function theory to establish existence and uniqueness of solutions \cite{ncODE}.

        
        The study of symmetric polynomials goes back to Newton's work on the roots of polynomials, and its noncommutative analogue has been investigated in \cite{ncsym1,ncsym2,ncsym3}. In 1916, Noether showed in her paper \cite{Noether1916} that, for a finite group $G$ acting linearly on $V$, $k[V]^G$ is always finitely generated as a ring with at most $|G|$ generators. This classical result inspired a sequence of research in noncommutative setting. As early as 1936, Wolf \cite{Wolf} studied the ring of noncommutative polynomials and showed that, in contrast to Noether's degree bound theorem, it admits no finite basis. This makes a 2013 result of Agler and Young \cite{2vsym} striking: working in two variables, they gave a realization formula for all analytic symmetric noncommutative functions and showed that the skew field of symmetric rational functions admits a finite basis, although the skew field itself is a much larger object than the polynomial ring. The key construction is a pair of eigenfunctions of the swap symmetry $\varphi(X,Y) = (Y,X)$:
        \begin{equation*}
            A(X,Y) = X+Y\qquad B(X,Y)=X-Y
        \end{equation*}
        satisfying $A\circ\varphi = A$ and $B\circ \varphi=-B$. Agler and Young showed that $A,B^2,BAB,BA^2B,\ldots$ generate the algebra of symmetric polynomials, so that $\{A,B^2,BAB\}$ already generates the full skew field of symmetric rational functions. In 2018, Agler, McCarthy, and Young \cite{symman} extended this picture geometrically, constructing a noncommutative manifold and exhibiting a correspondence between holomorphic functions on a two-dimensional manifold and symmetric holomorphic noncommutative functions in two variables.

        This line of work was subsequently generalized in several ways. Cushing, Pascoe, and Tully-Doyle \cite{nvsym} extended the two-variable swap-symmetric picture to a multivariable setting invariant under a finite group equipped with a unitary representation, and Klep, Pascoe, Podlogar, and Volčič \cite{nvsymc} gave the most general result of this type in the setting of realizations, covering multivariable noncommutative functions invariant under arbitrary finite solvable group actions. Most recently, Derksen and Volčič \cite{ncsymext} moved beyond the solvable case entirely: working with skew subfields of a free skew field under the action of an arbitrary finite group $G$, they proved finite generation in general and, for linear actions on a free skew field on $m$ generators, showed the invariant subfield is again free on $|G|(m-1)+1$ generators. They also exhibited a nonlinear $\mathbb{Z}_2$ action for which the invariant subfield fails to be free. This resolves the free Lüroth problem and refutes a 1978 conjecture of Cohn, hence showing that the linearity hypothesis in results of this kind, including our own, is not simply a technical convenience.

\subsection{Infinite group action and our result}
        The results above are confined to finite group actions. In this paper we study two and multiple variable noncommutative functions invariant under infinite (Lie) group actions.
        For $A\in GL_n$ acting on an n-tuple $\mathbf{X^{(n)}}=(X_1,X_2, \ldots, X_n)$ by
        \begin{equation*}
            A\cdot \mathbf{X^{(n)}}=A(X_1,X_2, \ldots, X_n) = (\sum_{i=1}^n A_{1i}X_i, \ldots, \sum_{i=1}^n A_{ni}X_i)
        \end{equation*}
        the noncommutative polynomials invariant under a subgroup $G\subseteq GL_n$ form the algebra
        \begin{equation*}
            \mathbb{C}^G\langle X^{(n)}\rangle = \{f(X^{(n)})\in \mathbb{C}\langle X^{(n)}\rangle\Big| f = f\circ A\; \text{for all } A\in G\}
        \end{equation*}
        Our goal is to identify subgroups $G\subseteq GL_n$ for which $\mathbb{C}^G<X^{(n)}>$ bears a tractable description as a polynomial ring, skew field of rational functions, or holomorphic functions.

        After a general result for invariant free holomorphic functions (Theorem 2.1), we work out the invariant theory for a sequence of group actions on two variables $X,Y$

        We study the case of $G=SO(2)$ in section 3.1. We show that the invariant ring is generated by the eigenfunctions $X+iY$ and $X-iY$, with each invariant subspace in degree $2n$ having dimension$\binom{2n}{n}$.

        The case of $G=SU(2)$ is studied in section 3.4. We prove that the degree $2n$ invariant polynomials are spanned by the $S_{2n}$ orbit of $(XY-YX)^n$. We also construct an explicit basis showing that the dimension of even degree invariant polynomial as a ring is the Catalan number $\frac{1}{n+1}\binom{2n}{n}$, and the dimensions of odd degree ones are 0.

        Finally in section 3.5 we conjecture a generalization from $SU(2)$ to  $SU(n)$, namely that for each fixed degree $mn$, the basis of $SU(n)$ invariant polynomials are the $S_{mn}$ orbit of the m-th power of a degree $n$ basic invariant polynomial.

\subsection{Connection with known results}\label{sec1.3}
        Notice that our $SU(2)$ action on variables $X,Y$ is defined as:
        \begin{align*}
            U\cdot X := U_{11}X+U_{12}Y\\
            U\cdot Y := U_{21}X+U_{22}Y
        \end{align*}
        for $U=\begin{pmatrix}
            U_{11}&U_{12}\\U_{21}&U_{22}
        \end{pmatrix} \in SU(2)$. This can be interpreted as a representation of $SU(2)$ on the vector space $V$ generated by $X$ and $Y$, which is the space of degree 1 polynomials in the noncommuting variables $X$ and $Y$, i.e. $\mathbb{C}_1\langle X,Y\rangle$. Therefore the $SU(2)$ action on degree $n$ polynomials can be understood as the representation of $SU(2)$ on $V^{\otimes n}$, under which degree-$n$ words in $X,Y$ correspond to the standard basis vectors of $V^{\otimes n}$. Our invariant space is then
        \begin{equation*}
            \mathbb{C}_{n}^{SU(2)}\langle X,Y\rangle \;\cong\; \mathrm{Hom}_{SU(2)}\big(\mathbb{C},\,V^{\otimes n}\big),
        \end{equation*}
        the object studied directly by the classical invariant theory of $SU(2)$. Here $\mathrm{Hom}^G_{SU(2)}\big(\mathbb{C},\,V^{\otimes n}\big)$ is the $SU(2)$-invariant maps between two representations, the trivial representation $\mathbb{C}$ and the representation $V^{\otimes n}$. Since $SU(2)$ act on $\mathbb{C}$ trivially, for any $\phi \in \mathrm{Hom}_{SU(2)}\big(\mathbb{C},\,V^{\otimes n}\big)$,
        \begin{equation*}
            \phi(1) = \phi(U\cdot 1) = U\cdot \phi(1)
        \end{equation*}
        therefore the image of $\phi$ is the $SU(2)$ invariant tensors, hence the $SU(2)$ invariant polynomials we are interested in. We use the $\mathrm{Hom}$ notation below only because it is what the cited literature uses.

        We record this connection both to situate our results relative to the literature and to explain why we nonetheless pursue an elementary, representation-theory-free route in Section~\ref{sec:algebra} below.

        \subsubsection{The classical result}\label{subsubsec:classical}
        That $SU(2)$-invariant tensors in $V^{\otimes 2n}$ are counted by the Catalan number $C_n$, with a basis indexed by non-crossing pairings of $2n$ points, is classical. It originates in the \emph{binor calculus} of Rumer, Teller, and Weyl \cite{RTW32}, later used by Penrose to define \emph{spin networks} \cite{Pen71} (trivalent graphs evaluated using the antisymmetrizer $\varepsilon_{AB}$), and independently underlies the Temperley--Lieb algebra $TL_n(\delta)$ \cite{TL71}, introduced via transfer matrices in statistical mechanics. The two pictures agree. On the basis $\{X,Y\}$ of $V=\mathbb{C}^2$, define
        \begin{equation*}
            \omega(v,w) := v_1w_2 - v_2w_1, \qquad v=v_1X+v_2Y,\ w=w_1X+w_2Y,
        \end{equation*}
        i.e.\ $\omega(X,Y)=1=-\omega(Y,X)$ and $\omega(X,X)=\omega(Y,Y)=0$: this is preserved by $SU(2)$. Writing $c=X\otimes Y-Y\otimes X$, define $e:V\otimes V\to V\otimes V$ and $e_i$ by
        \begin{equation*}
            e(v\otimes w) := \omega(v,w)\,c, \qquad e_i := \mathrm{id}^{\otimes(i-1)}\otimes e\otimes \mathrm{id}^{\otimes(n-i-1)}.
        \end{equation*}
        Recall that the Temperley--Lieb algebra $TL(\delta)$ is the unital associative algebra generated by $e_i's$ with the relations
        \begin{align*}
            e_i^2 &= \delta e_i\\
            e_i e_{i+1} e_i &= e_{i}\\
            e_i e_{i-1} e_i &= e_{i}\\
            e_i e_j &= e_j e_i \; \text{for }|i-j|\geq2
        \end{align*}
        The $e_i$ satisfy the Temperley--Lieb relations at $\delta=2$, giving an algebra map $TL_n(2)\to\mathrm{End}(V^{\otimes n})$ whose image is the \emph{full} centralizer $\mathrm{End}_{SU(2)}(V^{\otimes n})$. A modern, unified treatment is given in Kauffman and Lins \cite{KL94} through a diagrammatic aprroach.

        \subsubsection{Identification with our results}\label{subsubsec:identification}
        The correspondence with our own notation is term-by-term:
        \begin{center}
        \begin{tabular}{|p{5.3cm}|p{4.0cm}|c|}
            \hline
            \textbf{Our notation} & \textbf{Classical correspondent} & \textbf{Ref.} \\
            \hline
            $XY-YX$ & coevaluation $c$ of $\omega$ (Penrose's $\varepsilon_{AB}$) & --- \\
            \hline
            $P_n=\{XAYB-YAXB\}$ & recursive non-crossing basis & Thm.~\ref{part2} \\
            \hline
            $\dim \mathbb{C}^{SU(2)}_{2n}\langle X,Y\rangle = \frac{1}{n+1}\binom{2n}{n}$ & Catalan count & Thm.~\ref{conjSU2} \\
            \hline
        \end{tabular}
        \end{center}
        We do not reprove this correspondence in diagrammatic language here, but it can be checked directly, and no part of Theorems~\ref{simpSU2}--\ref{part2} below is in tension with it.

        \subsubsection{What is new here}\label{subsubsec:new}
        We do not claim Theorems~\ref{simpSU2}--\ref{part2} below as a new classification. The contribution we believe is new, is the \emph{method of proof}: our argument in Section~\ref{sec:algebra} proceeds entirely by direct manipulation of the Haar integral on $SU(2)$ (Definition~\ref{def:avgI}, Lemmas~\ref{etaI}--\ref{I(M-N)}, and the induction in Theorems~\ref{part1}, \ref{part2}), invoking no Schur--Weyl duality, Temperley--Lieb algebra, or diagrammatic calculus. It is an independent, elementary verification of the classical result from first principles.

        The significance of the new proof is that there is hope that it may generalize to situation that the existing results do not cover. For example there are examples where the dimension of degree 3k $SU(3)$ invariant nc polynomials doesn't match the dimension of generalized T-L algebra. 
        
        \subsubsection{The general $SU(n)$ case: Weyl's First Fundamental Theorem}\label{subsubsec:weylFFT}
        The phenomenon underlying $D_2:=XY-YX$ is not special to $n=2$. Weyl's \emph{First Fundamental Theorem of Classical Invariant Theory} \cite{Wey39} shows that for $G=SL(n)$ acting on tensor powers of its standard $n$-dimensional representation $V$, every invariant of arbitrarily many copies of $V$ is a polarization of an invariant of exactly $n$ copies -- equivalently, that the top exterior power $\Lambda^nV\cong\mathbb{C}$ is the atomic invariant, first appearing at degree $n=\dim V$. Both $D_2$ and the degree-$n$ invariant underlying our conjecture in Section~\ref{sec:SUn} are instances of this single construction:
        \begin{equation*}
            D_n := \sum_{\eta\in S_n}\mathrm{sgn}(\eta)\,\eta\curvearrowright(X_1X_2\cdots X_n) \;=\; X_1\wedge\cdots\wedge X_n \;\in\; V^{\otimes n}.
        \end{equation*}
        Here the permutation action is defined as
        \begin{equation*}
            \eta\curvearrowright(X_1X_2\cdots X_n):= (X_{\eta(1)}X_{\eta(2)}\cdots X_{\eta(n)})
        \end{equation*}

        \subsubsection{Diagrammatic bases at every rank}\label{subsubsec:diagbases}
        The combinatorial refinement of Weyl's theorem --- an explicit basis, not merely a generating set --- is likewise known at every rank, not only $n=2$, and in every case the construction is fundamentally \emph{diagrammatic}. Brauer \cite{Bra37} built a diagram algebra of cup--cap and through-strand tangles for $O(n)$ and $Sp(n)$, specializing at $SU(2)\cong Sp(2)$ to the Temperley--Lieb picture of \S\ref{subsubsec:classical}. Since $SU(3)$'s standard representation is not self-dual, Kuperberg \cite{Kup96} instead used oriented trivalent graphs (\emph{spiders}), with a source/sink vertex standing in for $TL$'s cup; Cautis, Kamnitzer, and Morrison \cite{CKM14} extended this to labeled trivalent \emph{webs}, presenting $\mathrm{Hom}_{SU(n)}(\mathbb{C},V^{\otimes kn})$ for all $SU(n)$ via quantum skew Howe duality. In each case the basis is exhibited graphically --- generators and relations for a category of diagrams --- unlike the direct Haar-integral manipulation used throughout Section~\ref{sec:algebra}.

        \subsubsection{Why the elementary route still matters}\label{subsubsec:whyelementary}
        Given \S\S\ref{subsubsec:weylFFT}--\ref{subsubsec:diagbases}, one might ask why the Haar-integral approach of Section~\ref{sec:algebra} is worth pursuing at all, rather than simply invoking Weyl's theorem or specializing the \cite{CKM14} web basis to our setting. Obtaining an explicit combinatorial description of invariants at general rank has historically required substantial new machinery at each step: a new trivalent vertex and oriented strands for $SU(3)$ alone in \cite{Kup96}, and a full quantum-skew-Howe-duality argument to reach general $n$ in \cite{CKM14} --- a question that remained open for nearly two decades between the two papers. Moreover, translating the resulting web bases into the specific noncommutative-polynomial language of this paper --- an explicit spanning set and dimension formula in the style of Theorem~\ref{conjSU2}, stated directly for $\mathbb{C}^{SU(n)}\langle X_1,\ldots,X_n\rangle$ --- is not immediate from \cite{CKM14} as stated, and would itself require nontrivial translation work. The Haar-integral method of Section~\ref{sec:algebra}, by contrast, is stated for a general compact Lie group from the outset (Definition~\ref{def:avgI} makes no essential use of $SU(2)$), and Theorems~\ref{simpSU2}--\ref{part2} show it is already powerful enough to independently recover the correct classical answer when $G=SU(2)$, directly in our own notation. This is the basis for the conjecture of Section~\ref{sec:SUn}: that the same technique, applied directly, may reach an explicit answer for $SU(n)$, $n>2$, more directly in our setting than reformulating the existing web-basis literature would.

    \section{Analysis Part}
        Before the algebraic classification that occupies the rest of the paper, we record a bridge to the analytic setting: for an arbitrary Lie group $G$, every $G$-invariant free holomorphic function nonvanishing at the origin already lies in the free closure of $G$-invariant nc polynomials, so the classification results of Section~\ref{sec:algebra} extend automatically to the analytic setting. The proof is short: the nc Oka--Weil theorem \cite{ncoka} expands $f$ as a series of homogeneous nc polynomials, and since $G$ preserves degree, invariance of $f$ forces each homogeneous piece to be invariant on its own.
        \begin{theorem}
            For any Lie group action $G$, all G-invariant free holomorphic function that is non vanishing at 0 is contained in the free closure of G-invariant nc polynomials
        \end{theorem}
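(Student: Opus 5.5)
The plan is to expand $f$ into homogeneous pieces via the nc Oka--Weil theorem \cite{ncoka}, show that each piece is separately $G$-invariant, and then read off membership in the free closure. Let $f$ be a $G$-invariant free holomorphic function on a free neighbourhood of $0$ (say a free polydisc or row-ball $B_\delta$) with $f(0)\neq 0$. By \cite{ncoka}, $f$ has a convergent nc power series
\begin{equation*}
f=\sum_{k\ge 0} f_k, \qquad f_k(\mathbf{X})=\sum_{|w|=k} c_w \mathbf{X}^w ,
\end{equation*}
with $f_k$ homogeneous of degree $k$. The series converges uniformly on each smaller free set $B_r$, $r<\delta$, so the partial sums $\sum_{k\le N} f_k$ are nc polynomials approximating $f$ in the free topology. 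The nonvanishing at $0$ is used only to put us in the setting of \cite{ncoka}, where $f$ is given on a neighbourhood of the origin; it plays no further role beyond $f_0=f(0)$ being an invariant constant.

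The key step is that each $f_k$ is $G$-invariant. Fix $A\in G$, which acts linearly. Since $G$ is a Lie group we cannot assume it is compact, so $A\cdot B_r$ need not lie inside $B_r$. I will therefore work on a small enough ball $B_\rho$ that both $B_\rho$ and $A\cdot B_\rho$ lie in the domain; this is possible since $A$ is a bounded linear map. The substitution $\mathbf{X}\mapsto A\cdot\mathbf{X}$ sends a homogeneous nc polynomial of degree $k$ to another homogeneous nc polynomial of degree $k$. Hence $f\circ A=\sum_k f_k\circ A$ is again the homogeneous expansion of a free holomorphic function near $0$.

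To identify the two expansions I use uniqueness of the power series. On a tuple $\mathbf{X}$ of matrices, put $\phi(t)=f(t\mathbf{X})=\sum_k t^k f_k(\mathbf{X})$ for $t\in\mathbb{C}$ small. This is an ordinary matrix-valued power series in the single commuting variable $t$. Invariance gives $f(tA\mathbf{X})=f(t\mathbf{X})$, and comparing Taylor coefficients in $t$ yields $f_k(A\mathbf{X})=f_k(\mathbf{X})$ for every $k$ and every small matrix tuple. A nc polynomial vanishing on all matrix tuples in a neighbourhood of $0$, at all sizes, is zero. So $f_k\circ A=f_k$ as nc polynomials, for every $k$ and every $A\in G$.

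It follows that every partial sum $p_N=\sum_{k\le N}f_k$ is a $G$-invariant nc polynomial, and $p_N\to f$ uniformly on free subsets $B_r$. Thus $f$ lies in the free closure of $\mathbb{C}^G\langle X^{(n)}\rangle$. I expect the main obstacle to be the domain bookkeeping in the middle step: making sure $f\circ A$ is defined on a common free neighbourhood and that "free closure" means uniform convergence on the relevant free sets. Shrinking to $B_\rho$ for each fixed $A$ handles the first issue, because the coefficient identity $f_k\circ A=f_k$ is purely algebraic once it holds near $0$. The approximation itself is taken on the original $B_r$ and does not depend on $A$.
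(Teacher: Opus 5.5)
Your proposal is correct and follows the same route as the paper. You expand $f$ into homogeneous pieces via the nc Oka--Weil theorem, note that invariance of $f$ forces each $f_k$ to be $G$-invariant because the linear action preserves degree, and conclude that the partial sums are $G$-invariant nc polynomials converging to $f$. Your dilation argument with $\phi(t)=f(t\mathbf{X})$, together with the domain bookkeeping, fills in the one-line step the paper asserts without justification.
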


        \begin{proof}
            Let $f$ be a free holomoprphic function such that $f(\mathbf{B}) = f(A\mathbf{B})$ for any $A\in G$ and $f(0) \neq 0$. By nc Oka-Weil theorem \cite{ncoka}, $f$ can be written as a series of homogeneous nc polynomails, i.e.
            \begin{equation*}
                f = \sum_{i=0}^{\infty} f_n
            \end{equation*}
            where $f_n$ is an homogeneous nc-polynomial of degree $n$. Then since $f$ is invariant under $G$ actions, each of $f_n$ also have to be invariant under $G$ actions.
        \end{proof}

    \section{Algebra Part}\label{sec:algebra}

	\subsection{The torus $SD (n)$}
		By $SD(n)$ we mean the set of diagonal matrices in $SL(n)$
		\begin{equation}
			D = diag(e^{i\theta_1},e^{i\theta_2},\ldots,e^{i\theta_{n}})
		\end{equation}
		  with $\sum_{1}^{n}\theta_j = 0$. So an $SD(n)$ action on $\mathbf{X^{(n)}}$ is
		  \begin{equation}
		 	D \cdot (X_1,\ldots,X_{n}) = (e^{i\theta_1}X_1,\ldots,e^{i\theta_{n}}X_{n})
		  \end{equation}
        Then there is a natural way to make it invariant by averaging it out. We define $P: \mathbb{C}\langle \mathbf{X}^{(n)}\rangle \rightarrow \mathbb{C}^{SD(2)}\langle \mathbf{X}^{(n)} \rangle$ by
        \begin{equation*}
            P\big(F(\mathbf{X}^{(n)})\big) := \int_{SD(n)} D\cdot F(\mathbf{X}^{(n)}) dm(D)
        \end{equation*}
        where $m(D)$ is the bi-invariant normalized Haar measure on $SD(n)$.
        Its image is invariant under any $SD(2)$ action $A$ since
        \begin{align*}
            &A\cdot P\big(F(\mathbf{X}^{(n)})\big) \\
            =&\int_{SD(n)} A\cdot\big(D\cdot F(\mathbf{X}^{(n)})\big) dD\\
            =&\int_{SD(n)} (DA)\cdot F(\mathbf{X}^{(n)}) dD\\
            =&P\big(F(\mathbf{X}^{(n)})\big)
        \end{align*}
        And $P$ is a projection because
        \begin{align*}
            P^2F &= \int_{SD(2)} G\cdot (PF)\;dG\\
            &=\int_{SD(2)} PF\;dG\\
            &=PF
        \end{align*}

        \begin{theorem}\label{balancedegree}
            The polynomials that are invariant under the $SD(n)$ action form a vector space generated by words in $X_1,\ldots,X_{n}$ such that each variable has the same degree
	 	\begin{equation*}
	 		\mathbb{C}^{SD(n)}<X_1,\ldots,X_{n+1}> =span\{\prod_k \prod_{j=1}^{n} X_j^{w_j^k} | \sum_k w_j^k \text{are the same for all j}\}
	 	\end{equation*}

        And the degree $kn$ homogeneous $SD(2)$ invariant polynomials have degree $kn$ choose $n,n,\ldots,n$
        \begin{equation*}
            deg(\mathbb{C}_{kn}^{SD(n)}) = \binom{kn}{n,n,\ldots,n}
        \end{equation*}
        \end{theorem}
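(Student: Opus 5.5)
The plan is to exploit the fact that every monomial (word) in $X_1,\ldots,X_n$ is already an eigenvector of the $SD(n)$ action, so invariance can be checked word by word. For a word $w=X_{i_1}X_{i_2}\cdots X_{i_m}$, let $d_j(w)$ be the number of occurrences of $X_j$ in $w$. Since the action is linear and diagonal, $D\cdot w = e^{i\sum_j d_j(w)\theta_j}\,w$. Thus $w$ spans a one-dimensional representation with character $\chi_{d(w)}(D)=e^{i\langle d(w),\theta\rangle}$.

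\textbf{Step 1: which characters are trivial.} I will show that $\chi_d$ is trivial on $SD(n)$ if and only if all $d_j$ are equal. If $d_1=\cdots=d_n=k$, then $\langle d,\theta\rangle=k\sum_j\theta_j=0$. Conversely, if $d_a\neq d_b$, take $\theta_a=t$, $\theta_b=-t$ and all other $\theta_j=0$. This gives $\chi_d=e^{it(d_a-d_b)}$, which is nonconstant in $t$. More generally, I will record that $\chi_d=\chi_{d'}$ on $SD(n)$ iff $d-d'\in\mathbb{Z}(1,\ldots,1)$. In particular, two words of the same total degree have the same character iff they have the same content vector $d$.

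\textbf{Step 2: invariance is monomial-wise.} Write $F=\sum_w c_w w$. Applying the projection $P$ defined above and using the orthogonality of characters on the torus gives
\[
\int_{SD(n)}\chi_d(D)\,dm(D)=\begin{cases}1, & \chi_d\equiv 1,\\ 0, & \text{otherwise.}\end{cases}
\]
This is checked by reducing to the one-parameter subgroups from Step 1, or by parametrizing by $\theta_1,\ldots,\theta_{n-1}$. Hence $PF=\sum_{w:\,d(w)\text{ constant}}c_w w$. If $F$ is invariant then $F=PF$, so $c_w=0$ unless all $d_j(w)$ agree. Conversely, each such word is invariant by Step 1. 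This yields the claimed spanning description. Since distinct words are linearly independent in $\mathbb{C}\langle \mathbf{X}^{(n)}\rangle$, these words in fact form a basis.

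\textbf{Step 3: counting.} An invariant homogeneous polynomial of degree $kn$ is spanned by the words of length $kn$ in which each $X_j$ appears exactly $k$ times. These words are counted by the multinomial coefficient
\[
\binom{kn}{k,k,\ldots,k}=\frac{(kn)!}{(k!)^n},
\]
and in degrees not divisible by $n$ there are no invariants. I read the formula in the statement as this multinomial, the equal-content count. For $n=2$ it gives $\binom{2k}{k}$, matching the $SO(2)$ count quoted in the introduction; I would note the indexing explicitly when writing it up.

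The only step with real content is the orthogonality integral in Step 2, namely justifying that a nontrivial character integrates to zero against the normalized Haar measure. I would handle it with the standard translation-invariance argument: choose $A$ with $\chi_d(A)\neq1$, so that $\int\chi_d=\chi_d(A)\int\chi_d$. This avoids any explicit parametrization of the measure.
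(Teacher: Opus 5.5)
Your proof is correct and follows essentially the paper's route: each word is an eigenvector of the diagonal action, so the averaging projection $P$ acts word by word and keeps exactly the equal-content words, and your translation-invariance argument for $\int\chi_d\,dm=0$ (the same trick the paper uses in Lemma~\ref{Ikernel}) together with $F=PF$ for invariant $F$ supplies the direction the paper leaves implicit, since the paper only records $Pw=w$ for balanced $w$. You are also right to read the count as $\binom{kn}{k,\ldots,k}=(kn)!/(k!)^n$: taken literally, $\binom{kn}{n,\ldots,n}=(kn)!/(n!)^k$ is a different number (for example $1$ instead of $2$ when $n=2$, $k=1$), and only your reading is consistent with the $\binom{2k}{k}$ count the paper gives for $SO(2)$.
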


        \begin{proof}
            Explicitly, the Haar measure $m$ on $SD(n)$ is given by
            \begin{equation*}
                m\Big(diag(e^{i\theta_1},\ldots,e^{i\theta_{n-1}},e^{-i\sum_{j=1}^{n-1}\theta_j})\Big) = \frac{1}{(2\pi)^{n-1}}d\theta_1\ldots d\theta_{n-1}
            \end{equation*}
            Then the image of the word $w=\prod_k \prod_{j=1}^{n} X_j^{w_j^k}$ under $P$ is
            \begin{align*}
                Pw &= \int_{SD(2)} D\cdot w\;dm(D)\\
                &= \int_{SD(2)} D\cdot \prod_k \prod_{j=1}^{n} X_j^{w_j^i}\;dm(D)\\
                &= \frac{1}{(2\pi)^{n-1}} \int_{0}^{2\pi}\ldots\int_{0}^{2\pi} \prod_{k} \Big[\Big(\prod_{j=1}^{n-1} (e^{i\theta_j}X_j)^{w_j^k}\Big)(e^{-i\sum_{j=1}^{n-1}\theta_j}X)^{w_n^k}\Big] d\theta_1\ldots d\theta_{n-1}\\
                &= \frac{1}{(2\pi)^{n-1}} \int_{0}^{2\pi}\ldots\int_{0}^{2\pi} \Big[\prod_{j=1}^{n-1}e^{i\sum_k (w_j^k-w_n^k)\theta_j}\Big] \prod_k \prod_{j=1}^{n} X_j^{w_j^i}\;d\theta_1\ldots d\theta_{n-1}\\
                &=\Big[\frac{1}{(2\pi)^{n-1}} \int_{0}^{2\pi}\ldots\int_{0}^{2\pi} \Big(\prod_{j=1}^{n-1}e^{i\sum_k (w_j^k-w_n^k)\theta_j}\Big) \;d\theta_1\ldots d\theta_{n-1} \Big] w
            \end{align*}
            When $\sum_k (w_j^k-w_n^k) =0$ for all $j$, $Pw=w$. This proves the first part of the theorem.

            Because each length $kn$ word is invariant when the degrees of all letters are the same, there are $\binom{kn}{n,n,\ldots,n}$ many such words in degree $kn$.
        \end{proof}

    \subsection{Structure of $\mathbb{C}^{SO(2)} \left\langle X,Y \right\rangle $}
        In this section, we show that the geometric equivalence of $SO(2)$ and $SD(2)$ gives the equivalence of polynomials that are invariant under $SO(2)$ and those that are invariant under $SD(2)$.
        \begin{theorem}
			$\mathbb{C}^{SO(2)} \left\langle X,Y \right\rangle $ is isomorphic to $\mathbb{C}^{SD(2)} \left\langle X,Y \right\rangle $.
		\end{theorem}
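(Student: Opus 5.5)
The plan is to exhibit an explicit linear change of variables that conjugates the $SO(2)$ action into the $SD(2)$ action. The isomorphism on invariant algebras is then just substitution of variables. Set
\begin{equation*}
    Z = X+iY, \qquad W = X-iY,
\end{equation*}
so that $X = \tfrac12(Z+W)$ and $Y = \tfrac{1}{2i}(Z-W)$. Let $T\in GL_2(\mathbb{C})$ be the matrix sending $(X,Y)$ to $(Z,W)$. Then $T$ induces a degree-preserving algebra automorphism $\Phi_T$ of $\mathbb{C}\langle X,Y\rangle$, given by $f(X,Y)\mapsto f(T(X,Y))$. Because $X,Y$ are free noncommuting generators, $\Phi_T$ is well defined and invertible, with inverse $\Phi_{T^{-1}}$.

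The key computation is the conjugation identity $T\,R_\theta\,T^{-1} = \mathrm{diag}(e^{-i\theta},e^{i\theta})$ for the rotation
\begin{equation*}
    R_\theta=\begin{pmatrix}\cos\theta & -\sin\theta\\ \sin\theta&\cos\theta\end{pmatrix}.
\end{equation*}
The sign of the exponent depends on the orientation convention of the paper's action. Concretely, under $X\mapsto \cos\theta X-\sin\theta Y$ and $Y\mapsto \sin\theta X+\cos\theta Y$ one checks $Z\mapsto e^{i\theta}Z$ and $W\mapsto e^{-i\theta}W$. Hence $\theta\mapsto TR_\theta T^{-1}$ is a bijection $SO(2)\to SD(2)$, which is in fact a group isomorphism.

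From this I will deduce the theorem. For $f\in\mathbb{C}\langle X,Y\rangle$ and $A\in GL_2$, compatibility of composition gives $(f\circ T)\circ A = f\circ (TAT^{-1})\circ T$. Therefore $f$ is $SD(2)$-invariant if and only if $\Phi_T f=f\circ T$ is $SO(2)$-invariant. It follows that $\Phi_T$ restricts to an algebra isomorphism $\mathbb{C}^{SD(2)}\langle X,Y\rangle\to\mathbb{C}^{SO(2)}\langle X,Y\rangle$, and this isomorphism preserves degree. Combined with Theorem~\ref{balancedegree}, this shows that the $SO(2)$ invariants are spanned by words in $X+iY$ and $X-iY$ with equal numbers of each letter. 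In particular, degree $2n$ has dimension $\binom{2n}{n}$ and odd degrees are zero.

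The main obstacle is bookkeeping: the paper's action convention is $f\circ A$, and I must verify that composition order and the left/right placement of $T$ make the conjugation identity come out correctly rather than inverted. If the order is wrong, I will replace $T$ by $T^{-1}$ or $\theta$ by $-\theta$. Neither change affects the conclusion, since $SD(2)$ is closed under inversion. A secondary point is that the isomorphism is only over $\mathbb{C}$, not over $\mathbb{R}$, which is fine because the paper works with $\mathbb{C}\langle X,Y\rangle$.
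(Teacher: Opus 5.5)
Your proposal is correct and follows the paper's approach: a fixed invertible linear substitution (the paper's matrix $A=\frac{1}{\sqrt2}\begin{pmatrix}1&i\\ i&1\end{pmatrix}$ equals $\frac{1}{\sqrt2}\mathrm{diag}(1,i)\,T$, your $T$ up to a diagonal rescaling) carries $SD(2)$-invariants onto $SO(2)$-invariants. Your conjugation identity $T\,SO(2)\,T^{-1}=SD(2)$ is in fact the step the paper's written argument needs but does not supply, since it instead asserts $A\in SO(2)$ and appeals to $BA\in SO(2)$; the sign slip in your stated $TR_\theta T^{-1}$ is harmless, because your own check gives $\mathrm{diag}(e^{i\theta},e^{-i\theta})$, which still lies in $SD(2)$.
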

        \begin{proof}
            Consider $A=\frac{1}{\sqrt{2}}\begin{pmatrix}
                1&i\\i&1
            \end{pmatrix}\in SO(2)$.
            Let $A$ be a single matrix action. Then $A$ maps $SD(2)$ invariant matrices to $SO(2)$ invariant matrices. Indeed, for any $SD(2)$ invariant matrix $f(X,Y)$ and an $SO(2)$ action $B$,
            \begin{align*}
                &B\cdot\big(A\cdot f(X,Y)\big)\\
                =&(BA)\cdot f(X,Y)\\
                =&f(X,Y)
            \end{align*}
            This is because $BA\in SO(2)$.
            Since $A$ is invertible as a matrix, the ring homomorphism defined by $A$ is an isomorphism.
        \end{proof}
        In fact, since the action $A$ preserves the length of each word, on each degree $k$, $\mathbb{C}^{SO(2)}_k \left\langle X,Y \right\rangle $ is isomorphic to $\mathbb{C}^{SD(2)}_k \left\langle X,Y \right\rangle $.

        \begin{corollary}
            The $SO(2)$ invariant nc polynomials of degree $k$ $\mathbb{C}^{SO(2)}_k$ is a vector space generated by words in same number of letters $X+iY$ and $Y+iX$, and the degree is $\binom{2n}{n}$.
        \end{corollary}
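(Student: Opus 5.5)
The plan is to transport the description of $SD(2)$-invariants from Theorem~\ref{balancedegree} (case $n=2$) through the conjugating matrix $A=\frac{1}{\sqrt{2}}\begin{pmatrix}1&i\\i&1\end{pmatrix}$ used in the preceding theorem. Theorem~\ref{balancedegree} says $\mathbb{C}^{SD(2)}_{2n}\langle X,Y\rangle$ is spanned by the words having $n$ letters $X$ and $n$ letters $Y$. It also says there are no invariants in odd degree, since a word is averaged to zero unless both letters occur equally often. These words are linearly independent, so the dimension is $\binom{2n}{n}$.

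Next, apply the substitution $f\mapsto A\cdot f$. It sends $X\mapsto \frac{1}{\sqrt2}(X+iY)$ and $Y\mapsto\frac{1}{\sqrt2}(iX+Y)$. Since it is an algebra homomorphism, a balanced word in $X,Y$ goes to $2^{-n}$ times the same word with each $X$ replaced by $X+iY$ and each $Y$ replaced by $Y+iX$. The scalar $2^{-n}$ is harmless for spans. The preceding theorem and the remark after it say this map is a degree-preserving isomorphism onto the $SO(2)$-invariants. Therefore the images of the balanced words span $\mathbb{C}^{SO(2)}_{2n}\langle X,Y\rangle$. These images are exactly the words containing the same number of the letters $X+iY$ and $Y+iX$.

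Linear independence of the images is inherited because $A$ is invertible. Indeed, $X+iY$ and $Y+iX$ are linearly independent degree-one elements, since $\det\begin{pmatrix}1&i\\i&1\end{pmatrix}=2\neq0$. They therefore generate a free algebra, and distinct words in them are independent. So the dimension in degree $k=2n$ is $\binom{2n}{n}$, and it is $0$ in odd degree.

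The main point needing care is the direction of the conjugation. One must check that $A$ genuinely carries $SD(2)$-invariants onto $SO(2)$-invariants. This amounts to $A\,SD(2)\,A^{-1}=SO(2)$, i.e.\ $A$ diagonalizes rotations with eigenvalues $e^{\pm i\theta}$. I would verify this directly: conjugate a rotation matrix by $A$ and check that the result is $\mathrm{diag}(e^{i\theta},e^{-i\theta})$, up to possibly swapping the two factors. Once that holds, both spanning and dimension follow formally. Finally, the statement's ``degree $k$'' should be read as $k=2n$ for the dimension formula $\binom{2n}{n}$.
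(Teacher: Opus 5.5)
Your proposal is correct and follows the paper's route: the corollary comes from pushing the balanced-word description of $\mathbb{C}^{SD(2)}_{2n}\langle X,Y\rangle$ in Theorem~\ref{balancedegree} through the degree-preserving substitution by $A$. The conjugation check you single out is exactly the justification that step needs, and it succeeds in either direction, since for a rotation $R_\theta$ one has $AR_\theta A^{-1}=\mathrm{diag}(e^{i\theta},e^{-i\theta})$ and $A^{-1}R_\theta A=\mathrm{diag}(e^{-i\theta},e^{i\theta})$; the paper's own stated reason (that $A$ and $BA$ lie in $SO(2)$) does not work, because $A$ is not real and so is not in $SO(2)$.
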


    \subsection{The relationship between $\mathbb{C}^{SO(n)}$ and $\mathbb{C}^{SU(n)}$}
        Here we show that nc polynomials that are invariant under $SU(2)$ actions are the same as those that are invariant under both $SD(2)$ and $SO(2)$ actions.
        \begin{lemma} \label{UOD}
        The $SU(2)$ invariant polynomials satisfy
        \begin{equation}
            \mathbb{C}^{SU(n)} = \mathbb{C}^{SO(n)}\cap \mathbb{C}^{SD(n)}
        \end{equation}
        \end{lemma}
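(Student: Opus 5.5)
The inclusion $\mathbb{C}^{SU(n)}\subseteq \mathbb{C}^{SO(n)}\cap\mathbb{C}^{SD(n)}$ is immediate. Both $SO(n)$ (real orthogonal matrices of determinant one) and the diagonal torus $SD(n)$ are subgroups of $SU(n)$. A polynomial fixed by every element of $SU(n)$ is therefore fixed by every element of each subgroup.

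For the reverse inclusion I would first record a closure property of invariance. Since $U\cdot(V\cdot f)=(UV)\cdot f$ (the computation used in the proof of the $SO(2)$–$SD(2)$ isomorphism), the stabilizer
\begin{equation*}
\mathrm{Stab}(f)=\{U\in GL_n : U\cdot f=f\}
\end{equation*}
is a subgroup of $GL_n$. If $f$ lies in $\mathbb{C}^{SO(n)}\cap\mathbb{C}^{SD(n)}$, then $\mathrm{Stab}(f)$ contains both $SO(n)$ and $SD(n)$. It therefore contains the subgroup they generate, so it suffices to show that $SO(n)$ and $SD(n)$ generate $SU(n)$.

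For $n=2$, which is the case used later, I would argue by Euler angles. Every $U\in SU(2)$ can be written
\begin{equation*}
U=\begin{pmatrix} e^{i\alpha}&0\\0&e^{-i\alpha}\end{pmatrix}\begin{pmatrix}\cos\beta&-\sin\beta\\ \sin\beta&\cos\beta\end{pmatrix}\begin{pmatrix} e^{i\gamma}&0\\0&e^{-i\gamma}\end{pmatrix}.
\end{equation*}
To see this, write $U=\begin{pmatrix} a&-\bar b\\ b&\bar a\end{pmatrix}$ with $|a|^2+|b|^2=1$. Choose $\beta\in[0,\pi/2]$ with $\cos\beta=|a|$ and $\sin\beta=|b|$. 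Then solve $\alpha+\gamma=\arg a$ and $\gamma-\alpha=\arg b$. When $a=0$ or $b=0$ one argument is arbitrary and the equations are still solvable. Multiplying out confirms the factorization. Hence $U$ is a product of elements of $SD(2)$ and $SO(2)$, and $f=U\cdot f$.

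For general $n$ I would use a Lie-algebra argument.
\begin{itemize}
\item The Lie algebra $\mathfrak{so}(n)$ consists of real antisymmetric matrices, and $\mathfrak{sd}(n)$ consists of $i$ times real traceless diagonal matrices.
\item The bracket of $iD$ with the antisymmetric matrix $E_{jk}-E_{kj}$ is $i(d_j-d_k)(E_{jk}+E_{kj})$, which gives all imaginary symmetric off-diagonal directions.
\item Together with the two original algebras, these span $\mathfrak{su}(n)$.
\item The subgroup generated by $SO(n)\cup SD(n)$ is path-connected and its Lie algebra contains all these brackets. Since $SU(n)$ is connected, this subgroup is all of $SU(n)$.
\end{itemize}

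I expect the main obstacle to be the generation step for $n>2$. Passing from "the Lie algebra is generated" to "the group is generated" needs the standard fact that an arc-generated subgroup of a Lie group is a Lie subgroup (Yamabe's theorem), or an explicit factorization such as $SU(n)=K\,T\,K$-type decompositions. For this paper it is enough to give the explicit $n=2$ factorization above, and I would present that as the main proof.

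A caveat: if ``$SO(2)$'' is intended to mean the conjugated torus $A\,SD(2)A^{-1}$ from the previous subsection, the same Euler-type factorization holds. Any two distinct maximal tori of $SU(2)$ generate $SU(2)$, because their Lie algebras are two non-parallel vectors in $\mathfrak{su}(2)\cong\mathbb{R}^3$, whose bracket supplies the third direction.
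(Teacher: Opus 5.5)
Your proof is correct, but it takes a different route from the paper's.

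\textbf{The paper's route.} The paper proves an explicit three-factor decomposition: every $U\in SU(n)$ is $U=ADB$ with $A,B\in SO(n)$ and $D\in SD(n)$. This is the Cartan $KAK$ decomposition for the pair $(SU(n),SO(n))$. The paper obtains it from a simultaneous singular value decomposition of $\mathrm{Re}\,U$ and $\mathrm{Im}\,U$, so invariance under $U$ follows from invariance under each factor.

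\textbf{Your route.} You need only the weaker fact that $SO(n)$ and $SD(n)$ \emph{generate} $SU(n)$, which you get by observing that the stabilizer of $f$ is a subgroup. For $n=2$ your Euler factorization $SU(2)=SD(2)\,SO(2)\,SD(2)$ is fully explicit. It also sidesteps two points the paper's argument leaves implicit. The decomposition of $\mathrm{Re}\,U$ must be chosen compatibly with $\mathrm{Im}\,U$ when singular values repeat (e.g.\ when $\mathrm{Re}\,U=0$). Signs must also be adjusted so that $A,B$ land in $SO(n)$ rather than $O(n)$.

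\textbf{Why your $n=2$ ordering does not extend.} For $n\ge 3$, the set $SD(n)\,SO(n)\,SD(n)$ has dimension at most $2(n-1)+\tfrac{n(n-1)}{2}<n^2-1$, so it cannot cover $SU(n)$. The paper's ordering $SO(n)\,SD(n)\,SO(n)$ has exactly the right dimension and works uniformly in $n$. This is why you need a Lie-algebraic step for general $n$. Your bracket computation $[iD,E_{jk}-E_{kj}]=i(d_j-d_k)(E_{jk}+E_{kj})$ is right, and it does fill out $\mathfrak{su}(n)$ together with $\mathfrak{so}(n)$ and $\mathfrak{sd}(n)$.

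\textbf{Avoiding Yamabe's theorem.} You can dispense with Yamabe's theorem. The stabilizer $\{U\in SU(n): f\circ U=f\}$ is closed, because the coefficients of $f\circ U$ are polynomials in the entries of $U$. By Cartan's closed-subgroup theorem it is therefore a Lie subgroup. Its Lie algebra contains $\mathfrak{so}(n)$, $\mathfrak{sd}(n)$ and hence their brackets, so it equals $\mathfrak{su}(n)$. Connectedness of $SU(n)$ then gives the whole group.

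\textbf{Presentation.} The lemma is stated for $SU(n)$, so this general-$n$ argument belongs in the proof rather than behind the $n=2$ case. Your closing caveat is unnecessary: the paper's $SO(n)$ is the real rotation group, and its proof explicitly uses real orthogonal $A,B$.
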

        One direction of the inclusion $\mathbb{C}^{SU(n)} \subseteq \mathbb{C}^{SO(n)}\cap \mathbb{C}^{SD(n)}$ is trivial and the other direction is a result of the following lemma
        \begin{lemma}
            For any $U \in SU(n)$, there exists $A,B\in SO(n)$ and $D \in SD(n)$ such that $U=ADB$
        \end{lemma}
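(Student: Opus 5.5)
We read $SO(n)$ as the real orthogonal matrices of determinant $1$ and $SD(n)$ as the diagonal unitary matrices of determinant $1$. The plan is to prove this Cartan (KAK) decomposition for the pair $(SU(n),SO(n))$ by diagonalizing the symmetric unitary matrix $W:=U^{T}U$ with a real orthogonal matrix. Everything is explicit linear algebra and uses nothing from earlier in the paper.

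\textbf{Step 1: diagonalize $W$ by a real rotation.} The matrix $W$ is unitary and symmetric, with $\det W=(\det U)^2=1$. Write $W=S+iT$ with $S,T$ real symmetric. Since $\overline{W}=S-iT$ and $W^{*}=\overline{W}^{T}=\overline{W}$, unitarity gives
\begin{equation*}
WW^{*}=(S+iT)(S-iT)=S^2+T^2+i(TS-ST)=I .
\end{equation*}
Comparing real and imaginary parts gives $ST=TS$. Two commuting real symmetric matrices are simultaneously diagonalizable by a real orthogonal matrix $O$. If $\det O=-1$, we negate one column of $O$, so we may take $O\in SO(n)$. Then $W=O\Lambda O^{T}$, where $\Lambda$ is diagonal and unitary (its entries have modulus $1$) and $\det\Lambda=1$.

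\textbf{Step 2: take a square root of $\Lambda$ in $SD(n)$.} Choose a diagonal $D$ with $D^2=\Lambda$, taking any square root of each diagonal entry. Then $(\det D)^2=\det\Lambda=1$, so $\det D=\pm1$. If $\det D=-1$, negate one diagonal entry of $D$. This keeps $D^2=\Lambda$ and makes $\det D=1$, so $D\in SD(n)$.

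\textbf{Step 3: show the remaining factor is a real rotation.} Set $V:=UOD^{-1}$. Since $D$ is diagonal, $D^{T}=D$, and so
\begin{equation*}
V^{T}V=D^{-1}O^{T}U^{T}UOD^{-1}=D^{-1}\Lambda D^{-1}=I .
\end{equation*}
$V$ is also unitary, since it is a product of unitaries, so $V^{*}V=I$. Together with $V^{T}V=I$ this forces $\overline{V}=V$, so $V$ is real orthogonal. Its determinant is $\det U\det O/\det D=1$, so $V\in SO(n)$. Therefore
\begin{equation*}
U=V\,D\,O^{T},
\end{equation*}
and $A=V$, $B=O^{T}$, $D$ give the required factorization.

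The only delicate points are the simultaneous diagonalization in Step 1 and the two determinant sign corrections, one for $O$ and one for $D$. Both corrections are harmless because negating a column of $O$, or one entry of $D$, preserves the relevant identities $W=O\Lambda O^{T}$ and $D^2=\Lambda$.
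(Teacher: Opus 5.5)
Your proof is correct, and it takes a genuinely different route from the paper's.

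The paper writes $U=U_r+iU_i$, takes a singular value decomposition $U_r=AD_rB$, and argues that the same $A,B$ also give $U_i=AD_iB$, citing a simultaneous singular value decomposition result, so that $D=D_r+iD_i$. You instead diagonalize the symmetric unitary matrix $U^TU$ by a real rotation, using that its real and imaginary parts commute. You then take a diagonal square root $D$ of the resulting $\Lambda$ and obtain the left factor from the formula $V=UOD^{-1}$, whose reality is forced by $V^TV=V^*V=I$.

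The paper's route reads $D$ off directly from the real and imaginary parts of $U$: the diagonal entries of $D_r$ and $D_i$ are the cosines and sines of the phases of $D$. To be complete, however, it needs both $U_rU_i^T$ and $U_r^TU_i$ to be symmetric, which follows from $UU^*=I$ and $U^*U=I$ and is the hypothesis of the simultaneous decomposition theorem. Knowing only that $A$ diagonalizes $U_iU_i^T$ and $B$ diagonalizes $U_i^TU_i$ is not enough when $U_r$ has repeated singular values. For instance, $U=\begin{pmatrix}0&i\\ i&0\end{pmatrix}\in SU(2)$ has $U_r=0$, so $A=B=I$ is a legitimate singular value decomposition of $U_r$, yet $A^TU_iB^T=U_i$ is not diagonal. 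The paper also does not arrange $\det A=\det B=1$.

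Your argument uses only the spectral theorem for commuting real symmetric matrices. It never has to choose two orthogonal factors compatibly, because the second is determined by the first, so repeated eigenvalues cause no trouble, and it makes both determinant corrections explicit.
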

        \begin{proof}
            Since $A$ and $B$ are two real orthogonal matrices, we only need to show the real and imaginary part of $U$ have a singular value decomposition by $A$ and $B$ simultaneously.

            Write $U = U_r + iU_i$, Since $U$ is special unitary,
            \begin{equation*}
                UU^* = U_rU_r^T + U_iU_i^T + i(-U_rU_i^T + U_iU_r^T)=I
            \end{equation*}
            Let $A,B$ be the matrices that provides a singular decomposition of $U_r$.
            \begin{equation*}
                U_r = AD_rB
            \end{equation*}
            Then by the definition of singular decomposition, $A^T U_rU_r^T A $ is diagonal. Because
            \begin{equation*}
                A^T U_iU_i^T A = A^T (I-U_rU_r^T) A
            \end{equation*}
            $S$ diagonalize $U_iU_i^T$.
            Similarly $B$ diagonalize $U_i^TU_i$. By \cite{SVD} chapter 7, $A,B$ also give a singular decomposition $U_i = AD_iB$.

            Lastly since $U,A,B$ are all special unitary matrices, $D = D_r+iD_i$ is also special unitary, so $D \in SD(n)$.
        \end{proof}

	\subsection{$SU(2)$}
		\begin{theorem}\label{simpSU2}
		    The $SU(2)$ invariant nc-polynomials $\mathbb{C}^{SU(2)}\langle X,Y\rangle$ are the $SO(2)$ invariant polynomials that in addition satisfy $f(X,Y) = f(\frac{X+iY}{\sqrt{2}}, \frac{Y+iX}{\sqrt{2}})$, i.e.
            \begin{equation*}
                \mathbb{C}^{SU(2)}\langle X,Y \rangle = \{f | f(X,Y) = f(\frac{X+iY}{\sqrt{2}}, \frac{Y+iX}{\sqrt{2}}), f\in\mathbb{C}^{SO(2)}\langle X,Y\rangle \}
            \end{equation*}
		\end{theorem}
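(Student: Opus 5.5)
The plan is to reduce the statement to Lemma~\ref{UOD}. Write
\begin{equation*}
  A=\frac{1}{\sqrt 2}\begin{pmatrix}1&i\\ i&1\end{pmatrix},
\end{equation*}
so that the condition $f(X,Y)=f(\frac{X+iY}{\sqrt2},\frac{Y+iX}{\sqrt2})$ says exactly that $A\cdot f=f$ for the linear action defined in \S\ref{sec1.3}. A direct check gives $A^*A=I$ and $\det A=\tfrac12(1-i^2)=1$, so $A\in SU(2)$; this is the only thing the inclusion $\subseteq$ needs. Every $SU(2)$-invariant $f$ is invariant under $A$. It is also invariant under $SO(2)$, since $SO(2)\subseteq SU(2)$. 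So $f$ lies in the right-hand side.

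For the inclusion $\supseteq$, let $f\in\mathbb{C}^{SO(2)}\langle X,Y\rangle$ with $A\cdot f=f$. By Lemma~\ref{UOD} it suffices to show that $f$ is $SD(2)$-invariant. The key observation is that $A$ conjugates $SO(2)$ onto $SD(2)$. The rotation
\begin{equation*}
  R_\theta=\begin{pmatrix}\cos\theta&-\sin\theta\\ \sin\theta&\cos\theta\end{pmatrix}
\end{equation*}
has eigenvectors $(1,-i)^T$ and $(1,i)^T$, with eigenvalues $e^{i\theta}$ and $e^{-i\theta}$. The columns of $A$ are $\frac1{\sqrt2}(1,i)^T$ and $\frac{i}{\sqrt2}(1,-i)^T$, which are exactly these two eigenvectors up to scalars. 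Hence
\begin{equation*}
  A^{-1}R_\theta A=\mathrm{diag}(e^{-i\theta},e^{i\theta}).
\end{equation*}
As $\theta$ ranges over $[0,2\pi)$, this runs through all of $SD(2)$.

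Now take $D\in SD(2)$ and write $D=A^{-1}R_\theta A$. The action is a group action (or anti-action, depending on whether one composes by substitution), so $D\cdot f$ is obtained by applying $A$, $R_\theta$ and $A^{-1}$ to $f$ in some order. Invariance under $A$ implies invariance under $A^{-1}$. Together with invariance under $R_\theta$, this gives $D\cdot f=f$ whichever order is used. Therefore $f\in\mathbb{C}^{SO(2)}\cap\mathbb{C}^{SD(2)}=\mathbb{C}^{SU(2)}$ by Lemma~\ref{UOD}.

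The only delicate step is the bookkeeping of conventions. The substitution $f\mapsto f\circ U$ reverses the order of products, so one must make sure that the conjugate is taken on the correct side. The point that saves us is that $f$ is invariant under both $A$ and $A^{-1}$. Moreover, $A R_\theta A^{-1}$ is diagonal as well: it equals $\mathrm{diag}(e^{i\theta},e^{-i\theta})$, since $A^{-1}=\bar A$ and conjugating by $\bar A$ simply swaps the two eigenvalues. So either convention yields all of $SD(2)$, and the argument goes through regardless.
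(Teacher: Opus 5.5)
Your proof is correct, and it follows the paper's overall reduction. The inclusion $\subseteq$ is argued identically, and for $\supseteq$ both you and the paper establish $SD(2)$-invariance and then apply Lemma~\ref{UOD}.

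Where you differ is in how $SD(2)$-invariance is obtained. The paper works with polynomials: it expands $f\in\mathbb{C}^{SO(2)}\langle X,Y\rangle$ in balanced words in $X+iY$ and $X-iY$ (the corollary in \S3.2), checks that the substitution turns these letters into scalar multiples of $X$ and $Y$, and concludes via Theorem~\ref{balancedegree}. You work with the group: the identity $A^{-1}R_\theta A=\mathrm{diag}(e^{-i\theta},e^{i\theta})$ shows that the stabilizer of $f$, which contains $SO(2)$ and $A$, contains $SD(2)$.

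Your version is shorter and needs neither the description of $\mathbb{C}^{SO(2)}\langle X,Y\rangle$ nor Theorem~\ref{balancedegree}. It also avoids the scalar bookkeeping in the paper's display, whose step replacing $X+iY,\,Y+iX$ by $\sqrt2X,\,\sqrt2Y$ is really the substitution by $A^{-1}$ and drops a harmless factor $i^{-\sum_j v_j}$. Finally, it makes explicit the conjugacy $A^{-1}SO(2)A=SD(2)$ that is the real content behind the isomorphism of \S3.2. The paper's computation, in return, shows concretely what an invariant looks like in both coordinate systems, which is the viewpoint it uses next for the operator $PTP$.
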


        \begin{proof}
            We have $SO(2)\subset SU(2)$ and $\begin{pmatrix}
                \frac{1}{\sqrt{2}} &i\frac{1}{\sqrt{2}}\\i\frac{1}{\sqrt{2}}&\frac{1}{\sqrt{2}}
            \end{pmatrix} \in SU(2)$. In other words,
            \begin{equation*}
                \mathbb{C}^{SU(2)}\langle X,Y \rangle \subseteq \{f | f(X,Y) = f(\frac{X+iY}{\sqrt{2}}, \frac{Y+iX}{\sqrt{2}}), f\in\mathbb{C}^{SO(2)}\langle X,Y\rangle \}
            \end{equation*}

            For the other direction, let
            \begin{equation*}
                f \in \{f | f(X,Y) = f(\frac{X+iY}{\sqrt{2}}, \frac{Y+iX}{\sqrt{2}}), f\in\mathbb{C}^{SO(2)}\langle X,Y\rangle \}
            \end{equation*}
            Since $f \in \mathbb{C}^{SO(2)}\langle X,Y\rangle$, $f$ can be written as a linear combination of words in letters $(x+iY)$ and $(X-iY)$ with each word having same number of the two letters. That is,
            \begin{equation*}
                f = \sum_{(w,v)\in W} (a_{w,v}\prod_{j}(X+iY)^{w_j}(X-iY)^{v_j})
            \end{equation*}
            where $W$ is the set of finite sequences pairs $(w,v)$ such that $\sum_j w_j = \sum_j v_j$. Since $f(X,Y) = f(\frac{X+iY}{\sqrt{2}}, \frac{Y+iX}{\sqrt{2}})$, \begin{align*}
                f &= \sum_{(w,v)\in W} (a_{w,v}\prod_{j}(X+iY)^{w_j}(X-iY)^{v_j})\\
                &= \sum_{(w,v)\in W} (a_{w,v}\prod_{j}(X+iY)^{w_j}(\frac{Y+iX}{i})^{v_j})\\
                &= \sum_{(w,v)\in W} (a_{w,v}\prod_{j}(\sqrt{2}X)^{w_j}(\frac{\sqrt{2}Y}{i})^{v_j})\\
                &= \sum_{(w,v)\in W} (2^{\sum w_j}a_{w,v}\prod_{j} X^{w_j} Y^{v_j})
            \end{align*}
            So $\sum_j w_j = \sum_j v_j$ gives $f \in \mathbb{C}^{SD(2)}\langle X,Y\rangle$. By (\ref{UOD}), $f \in \mathbb{C}^{SU(2)}\langle X,Y \rangle$.
        \end{proof}

        To give a context of where the next theorem comes from, consider the mapping $T: f(X,Y) \rightarrow f(\frac{X+iY}{\sqrt{2}}, \frac{Y+iX}{\sqrt{2}})$. $T$ is a linear map on $\mathbb{C}\langle X,Y\rangle$. By (\ref{UOD},\ref{simpSU2}), $\mathbb{C}^{SU(2}$ is the eigenspace of $PTP$ corresponding to eigenvalue 1, where $P$ is the projection from $\mathbb{C}\langle X,Y\rangle$ to $\mathbb{C}^{SD(2)} \langle X,Y\rangle$ that keeps the balanced degree words in a polynomial.

        Restrict the operator $PTP$ to $\mathbb{C}_{2n}\langle X,Y\rangle$, and denote it as $(PTP)_n$, then $(PTP)_n$ is a matrix of size $\binom{2n}{n}\times\binom{2n}{n}$. Pick any ordering of length $2n$ words in $X$ and $Y$ with n $X$'s and n $Y$'s as basis of $\mathbb{C}_{2n}\langle X,Y\rangle$, denote them as $W_1^{(2n)},\ldots,W_{\binom{2n}{n}}^{(2n)}$, then any polynomial in $\mathbb{C}_{2n}\langle X,Y\rangle$ can be identified as a length $\binom{2n}{n}$ column vector $v$ and the i,j-th entry of $(PTP)_n$ is
        \begin{equation}\label{ij}
            \Delta(W_i^{(2n)},W_j^{(2n)}) := \frac{(-1)^k}{2^{n}}
        \end{equation}
        where $k$ is the number of places at which $W_i^{(2n)}$ is different from $W_j^{(2n)}$.

        Note that since $\begin{pmatrix}
            0&1\\-1&0
        \end{pmatrix} \in SU(2)$, $(PTP)_n$ is always the top left block of itself when cut into 4 equally sized blocks tenser with $\begin{pmatrix}
            1&(-1)^n\\(-1)^n&1
        \end{pmatrix}$, which is useful in calculation.

        From the definition of $(PTP)_n$ (\ref{ij}), the matrix $(PTP)_n$ is independent of the order that each word is read in. Indeed, for any $\eta \in S_{2n}$ being a permutation of letters in each word. For a word $\overline{A_1,\ldots,A_{2n}}$, let $\eta \curvearrowright\overline{A_1,\ldots,A_{2n}} = \overline{A_{\eta(1)},\ldots,A_{\eta(2n)}}$. Then for a new ordering of length $2n$ words $V_j^{(2n)} = \eta\curvearrowright W_j^{(2n)}$, $\Delta(W_i^{(2n)},W_j^{(2n)})=\Delta(V_i^{(2n)},V_j^{(2n)})$. By an abuse of notation let $\eta$ be a $\binom{2n}{n}\times\binom{2n}{n}$ matrix representing the base change, we have $\eta (PTP)_n\eta^{-1} = (PTP)_n$ for all $\eta \in S_{2n}$.

        Now define the permutation $\eta$ on a degree $2n$ homogeneous polynomial $p=\sum a_jp_j$ by $\eta\curvearrowright p=\sum_j a_j (\eta\curvearrowright p_j)$ for monomials $p_j$. If in addition $p_j$ is the choosen basis $W_j^{(2n)}$, then $p$ corresponds to the column vector $(a_1,\ldots,a_{\binom{2n}{n}})^T$ and $\eta\curvearrowright p$ corresponds to $\eta\times(a_1,\ldots,a_{\binom{2n}{n}})^T$.

        Since $\eta (PTP)_n\eta^{-1} = (PTP)_n$ and $(XY-YX)^n$ is an obvious invariant polynomial in $\mathbb{C}_{2n}^{SU(2)}$, $\eta \curvearrowright (XY-YX)^n$ is also an invariant polynomial. It is natural to conjecture that this gives a full basis, and this turns out to be true.
        \begin{theorem} \label{conjSU2}
            \begin{equation*}
                \mathbb{C}_{2n}^{SU(2)} = span\{ \eta \curvearrowright(XY-YX)^n | \eta\in S_{2n}\}
            \end{equation*}
            and
            \begin{equation*}
                dim(\mathbb{C}_{2n}^{SU(2)}\langle X,Y\rangle = \frac{1}{n+1}\binom{2n}{n}
            \end{equation*}
        \end{theorem}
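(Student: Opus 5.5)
The plan is to prove the two assertions together by induction on $n$: a spanning statement and a matching dimension count. Write $D=XY-YX$ and $V_{2n}=\mathrm{span}\{\eta\curvearrowright D^n\mid \eta\in S_{2n}\}$. By the discussion preceding the statement, $V_{2n}\subseteq \mathbb{C}^{SU(2)}_{2n}$, since $\eta$ commutes with $(PTP)_n$. The substance is the reverse inclusion and the dimension count. Note that $\eta\curvearrowright D^n$ is, up to sign, the product over a perfect matching $\{(a_i,b_i)\}$ of $\{1,\dots,2n\}$ of the antisymmetric pairing placing $X$ at $a_i$ and $Y$ at $b_i$, minus the swap. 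So $V_{2n}$ is the span of the ``pairing polynomials'' $p_M$ indexed by perfect matchings $M$.

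\textbf{Upper bound on the invariant space.} I will work with the Haar average $I(f)=\int_{SU(2)}U\cdot f\,dU$, which is a projection onto $\mathbb{C}^{SU(2)}$. Any invariant $f$ of degree $2n$ equals $I(f)$, so it suffices to show $I(w)\in V_{2n}$ for every word $w$. Using Lemma \ref{UOD}, I restrict to balanced words. I then induct on $n$ by splitting off the first letter. Write $w=Xw'$ where $w'$ has one more $Y$ than $X$. Integrating first over the stabilizer-type decomposition $U=ADB$ should express $I(Xw')$ as a linear combination of terms $X\,I_1(\cdot)$ with the letter $X$ contracted against a $Y$ somewhere in $w'$. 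Concretely, I want the identity
\[
I(X u Y v)-I(Y u X v)\in \mathrm{span}\{D\text{-insertions}\},
\]
which reduces the problem to the invariant $I(uv)$ of degree $2n-2$, where the induction hypothesis applies. Inserting the pair $(X,Y)$ at two positions of a pairing polynomial gives again a pairing polynomial, so the induction closes. The hardest step is exactly this: computing $I$ on a word with one distinguished letter, via the second moments $\int U_{ij}\overline{U_{kl}}\,dU=\tfrac12\delta_{ik}\delta_{jl}$ together with the antisymmetry relation. I would first try the concrete route of using $(PTP)_n$ and $\Delta$ to verify that the residual $w-\sum(\text{pairing terms})$ lies in the kernel of $PTP-1$.

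\textbf{Basis and dimension.} Next I show that the non-crossing matchings already suffice. The Plücker-type relation among $D$'s on four positions $a<b<c<d$,
\[
p_{(ac)(bd)}=p_{(ab)(cd)}+p_{(ad)(bc)},
\]
is a direct check in the variables. Applying it repeatedly to uncross a crossing matching strictly decreases total crossing length, so the $p_M$ for non-crossing $M$ span $V_{2n}$. Non-crossing matchings are generated recursively: the first point pairs with some point $2k$, with a non-crossing $A$ inside and $B$ outside. This is exactly the recursion $XAYB-YAXB$ of $P_n$, giving at most $C_n$ spanning elements.

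\textbf{Linear independence.} Order words lexicographically with $X<Y$. For non-crossing $M$, the leading word of $p_M$ puts $X$ at each opener and $Y$ at each closer; this is a Dyck word with coefficient $\pm1$. Distinct non-crossing matchings give distinct Dyck words, via the bijection with balanced parentheses, so the $p_M$ are triangular with respect to this order and hence independent. This yields $\dim\mathbb{C}^{SU(2)}_{2n}=C_n=\frac{1}{n+1}\binom{2n}{n}$. Odd degree vanishes already by Theorem \ref{balancedegree}.
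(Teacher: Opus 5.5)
Your lower bound $V_{2n}\subseteq\mathbb{C}^{SU(2)}_{2n}$, the uncrossing via the Pl\"ucker relation, and the leading-Dyck-word argument for independence are sound, and together they give $\dim V_{2n}=C_n$. One small repair: the nested smoothing $(ac)(bd)\to(ad)(bc)$ leaves $\sum_i(b_i-a_i)$ unchanged, so induct on the number of crossings instead, which both smoothings strictly lower.

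The genuine gap is the reverse inclusion $\mathbb{C}^{SU(2)}_{2n}\subseteq V_{2n}$. Both the spanning claim and the dimension formula rest on it, and your proposal only sketches it. The identity you call the hardest step is in fact immediate. We have $XuYv-YuXv=\sigma\curvearrowright(D\,uv)$ for a position permutation $\sigma$, so Lemma~\ref{etaI} and $D\circ U=D$ give $I(XuYv)-I(YuXv)=\sigma\curvearrowright\bigl(D\,I(uv)\bigr)\in V_{2n}$ by induction; this is essentially Lemma~\ref{I(M-N)}. But this only shows that $I$ is constant modulo $V_{2n}$ on balanced words of length $2n$. It never places a single $I(w)$ in $V_{2n}$, so ``reduces the problem to $I(uv)$'' does not follow.

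Your proposed ways of closing this do not supply the missing step. Second moments of $U$ cannot compute $I$ on words of degree $2n\geq 4$, whose coefficients are degree-$2n$ moments. Requiring the residual $w-\sum(\text{pairing terms})$ to lie in $\ker(PTP-1)$ is the wrong condition, since that kernel is the invariant space itself; you need the residual to lie in $\ker I$.

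The missing idea is an anchor: one combination of balanced words whose average you can compute outright. The paper's proof of Theorem~\ref{part1} uses the sum $S=\sum_M M$ of all $\binom{2n}{n}$ balanced words. If $I(S)=0$, then $\binom{2n}{n}I(w)=\sum_M\bigl(I(w)-I(M)\bigr)+I(S)\in V_{2n}$.

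To prove $I(S)=0$, first note that by Lemma~\ref{Ikernel}, $I(S)=I\bigl((X+Y)^{2n}\bigr)$. Next, $u_\theta=\begin{pmatrix}\cos\theta&i\sin\theta\\ i\sin\theta&\cos\theta\end{pmatrix}\in SU(2)$ satisfies $(X+Y)\circ u_\theta=e^{i\theta}(X+Y)$. Invariance of Haar measure then gives $I\bigl((X+Y)^{2n}\bigr)=e^{2in\theta}I\bigl((X+Y)^{2n}\bigr)$ for every $\theta$, hence $I(S)=0$ for $n\geq 1$. A single element such as $u_{\pi/4}$ only yields the factor $i^n$, which is not enough when $4\mid n$.

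With this step added your proof is complete. Your basis half is then a genuinely different and cleaner route than the paper's: the Pl\"ucker uncrossing replaces the case analysis in the proof of Theorem~\ref{part2}, and lexicographic leading words replace the triangular matrices $C_n$.
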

        This theorem will be proved in the following sections.

        \subsection{Proof of Theorem \ref{conjSU2}}

        We firstly show that $\mathbb{C}_{2n}^{SU(2)} = span\{ \eta \curvearrowright(XY-YX)^n | \eta\in S_{2n}\}$ by induction.

        \begin{definition}\label{def:avgI}
            Let $dh$ be the normalized Haar measure on $SU(2)$. We define the averaging operator that yields an invariant function as
            \begin{equation*}
                I:f(X,Y) \mapsto \int_{SU(2)}u\cdot f (X,Y)d h(u)
            \end{equation*}
        \end{definition}

        Since $h$ is the Haar measure, fix any $v\in SU(2)$,
        \begin{align*}
            v(I(f)) =& v\Big(\int_{SU(2)}u\cdot f (X,Y)d h(u)\Big)\\
            = & \int_{SU(2)}v\cdot (u\cdot f(X,Y))d h(u)\\
            =& \int_{SU(2)}(uv)\cdot f (X,Y)d h(u)\\
            =& \int_{SU(2)}(uv)\cdot f (X,Y)d h(uv)\\
            =& I(f)
        \end{align*}
        So $I$ maps any non commutative polynomial to an $SU(2)$ invariant polynomial.
        \newline
        \begin{lemma}\label{etaI}
            If $f$ is a degree $n$ non-commutative polynomial, and $\eta\in S_n$ is a permutation, then $I(\eta \curvearrowright f) = \eta \curvearrowright I(f)$
        \end{lemma}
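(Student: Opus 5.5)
The plan is to show that the $SU(2)$ action and the positional permutation action commute on homogeneous degree-$n$ polynomials. The identity for $I$ then follows by integrating. Both operations are linear, and the integral is linear, so it suffices to check on monomials. By linearity of $I$ and of $\eta\curvearrowright$, reduce to $f=A_1A_2\cdots A_n$ with each $A_j\in\{X,Y\}$.

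\textbf{Step 1: the action is a tensor power.} Fix $u\in SU(2)$ and write $u\cdot X=L_u(X)$, $u\cdot Y=L_u(Y)$. Here $L_u$ is the linear substitution on $V=\mathrm{span}\{X,Y\}$. Since the action is by algebra homomorphism (substitution),
\begin{equation*}
u\cdot(A_1\cdots A_n)=L_u(A_1)L_u(A_2)\cdots L_u(A_n).
\end{equation*}
Expanding each factor gives a sum of words with coefficients. In the tensor language of Section~\ref{sec1.3}, this is exactly $L_u^{\otimes n}$ applied to $A_1\otimes\cdots\otimes A_n$.

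\textbf{Step 2: permutation commutes with each $u$.} The positional permutation is $\eta\curvearrowright(B_1\cdots B_n)=B_{\eta(1)}\cdots B_{\eta(n)}$, extended linearly. It is well defined on homogeneous degree-$n$ polynomials, because every term of a product of $n$ linear forms is a word of length $n$ and positional permutation is multilinear. Multilinearity gives
\begin{equation*}
\eta\curvearrowright\big(L_u(A_1)\cdots L_u(A_n)\big)=L_u(A_{\eta(1)})\cdots L_u(A_{\eta(n)})=u\cdot\big(\eta\curvearrowright(A_1\cdots A_n)\big).
\end{equation*}
To justify the first equality, expand each $L_u(A_j)=a_jX+b_jY$ into words. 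Each resulting word $B_1\cdots B_n$ has $B_j$ chosen from the $j$-th factor, and permuting positions corresponds to permuting which factor each letter came from. So $\eta\curvearrowright(u\cdot f)=u\cdot(\eta\curvearrowright f)$ for every $u$.

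\textbf{Step 3: integrate.} $\eta\curvearrowright$ is a fixed linear map on the finite-dimensional space $\mathbb{C}_n\langle X,Y\rangle$. The integral of a vector-valued continuous function can be computed coefficientwise, so $\eta\curvearrowright$ passes through the integral:
\begin{equation*}
\eta\curvearrowright I(f)=\int_{SU(2)}\eta\curvearrowright(u\cdot f)\,dh(u)=\int_{SU(2)}u\cdot(\eta\curvearrowright f)\,dh(u)=I(\eta\curvearrowright f).
\end{equation*}

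The only point requiring care is the multilinearity bookkeeping in Step 2, together with the fact that $\eta\curvearrowright$ is defined on all homogeneous polynomials rather than only on monomials. Once that is phrased via $V^{\otimes n}$, where $\eta$ permutes tensor factors and $L_u^{\otimes n}$ acts factorwise, the commutation is immediate.
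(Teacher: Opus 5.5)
Your proposal is correct and is essentially the paper's argument: the paper reduces to a word, expands $I(f)$ with coefficients $\int_{SU(2)}\prod_i u_{A_iB_{ji}}\,dh(u)$, and reindexes the words under $\eta$, which is your Step 2 bookkeeping carried out inside the integral; the paper's Lemma~\ref{lem:SUn-I}(b) states it in your pointwise-then-integrate form. Your convention $B_{\eta(1)}\cdots B_{\eta(n)}$ differs from the $\eta^{-1}$ convention in the paper's proof, but this does not matter, since the argument works for any fixed permutation of positions.
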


        \begin{proof}
        By linearity, we only need to show the case when $f$ is a word. Assume $f(X,Y)$ is a word $\overline{A_1A_2\ldots A_n}$, with $A_i$ being $X$ or $Y$. Then $\eta \curvearrowright f = \overline{A_{\eta^{-1}(1)}\ldots A_{\eta^{-1}(n)}}$.

        Assume $I(f) = \sum_{j=1}^{2^n}c_j \overline{B_{j1}\ldots B_{jn}}$, where $\overline{B_{i1}\ldots B_{in}}$ are also words in $X,Y$. As $j$ ranges from 1 to $2^n$, $\{\overline{B_{i1}\ldots B_{in}}\}_{j}$ covers all words of length $n$.

        Explicitly, for $u = \begin{pmatrix}
            u_{XX}&u_{XY}\\u_{YX}&u_{YY}
        \end{pmatrix}$,
        \begin{align*}
            I(f) &= \int_{SU(2)}f\circ u (X,Y)d h(u)\\
            &= \int_{SU(2)}f (u_{\scriptscriptstyle XY}X+u_{\scriptscriptstyle XY}Y,u_{\scriptscriptstyle YX}X+u_{\scriptscriptstyle YY}Y)d h(u)\\
            &= \int_{SU(2)} \sum_{j=1}^{2^n}\prod_{i=1}^{n}u_{\scriptscriptstyle A_i B_{ji}}\overline{B_{j1}\ldots B_{jn}}dh(u)\\
            &= \sum_{j=1}^{2^n}\Big(\int_{SU(2)} \prod_{i=1}^{n}u_{\scriptscriptstyle A_i B_{ji}}dh(u)\Big)\overline{B_{j1}\ldots B_{jn}}\;.
        \end{align*}

        Now compute $I(\eta \curvearrowright f)$
        \begin{align*}
            I(\eta \curvearrowright f) &= \int_{SU(2)}(\eta \curvearrowright f)\circ u (X,Y)d h(u)\\
            &= \int_{SU(2)} \sum_{j=1}^{2^n}\prod_{i=1}^{n}u_{\scriptscriptstyle A_{\eta^{-1}(i)} B_{ji}}\overline{B_{j1}\ldots B_{jn}}dh(u)\\
            &= \sum_{j=1}^{2^n}\Big(\int_{SU(2)} \prod_{i=1}^{n}u_{\scriptscriptstyle A_{\eta^{-1}(i)} B_{ji}}dh(u)\Big)\overline{B_{j1}\ldots B_{jn}}\\
            &= \sum_{j=1}^{2^n}\Big(\int_{SU(2)} \prod_{i=1}^{n}u_{\scriptscriptstyle A_{i} B_{j\eta(i)}}dh(u)\Big)\overline{B_{j1}\ldots B_{jn}}\;.
        \end{align*}
        Since $\overline{B_{j1}\ldots B_{jn}}$ covers all possible word of length n, for the word $\overline{B_{j\eta(1)}\ldots B_{j\eta(n)}}$, there exist another $j'$ such that $\overline{B_{j\eta(1)}\ldots B_{j\eta(n)}} = \overline{B_{j'1}\ldots B_{j'n}}$.

        So
        \begin{equation*}
            \int_{SU(2)} \prod_{i=1}^{n}u_{\scriptscriptstyle A_{i} B_{j\eta(i)}}dh(u) = \int_{SU(2)} \prod_{i=1}^{n}u_{\scriptscriptstyle A_{i} B_{j'i}}dh(u)
        \end{equation*}
        hence
        \begin{align*}
            I(\eta \curvearrowright f) &= \sum_{j'=1}^{2^n}\Big(\int_{SU(2)} \prod_{i=1}^{n}u_{\scriptscriptstyle A_{i} B_{j'i}}dh(u)\Big)\overline{B_{j'\eta^{-1}(1)}\ldots B_{j'\eta^{-1}(n)}}\\
            &= \eta \curvearrowright I(f)\;.
        \end{align*}
        \end{proof}

        \begin{lemma}\label{I(M-N)}
            Let $M(X,Y)$ and $N(X,Y)$ be two words in $\mathbb{C}_{2k+2}^{SD(2)}$, i.e. $M$ and $N$ are of degree $2k+2$, in which the degree of $X$ and $Y$ are both k+1. Assuming
            \begin{equation*}
                \mathbb{C}_{2k}^{SU(2)} = span\{ \eta \curvearrowright(XY-YX)^k | \eta\in S_{2k}\}
            \end{equation*}
            then
            \begin{equation*}
                I(M-N) \in span\{ \eta \curvearrowright(XY-YX)^{2k+2} | \eta\in S_{2k+2}\}
            \end{equation*}
        \end{lemma}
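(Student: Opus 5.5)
The plan is to reduce to the case where $M$ and $N$ differ only by a swap of two adjacent letters, and then use the invariance of $c:=XY-YX$ together with the induction hypothesis. (I read the target span as $span\{\eta\curvearrowright(XY-YX)^{k+1}\mid \eta\in S_{2k+2}\}$; this is the only reading consistent with degree $2k+2$.)

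\emph{Reduction to an adjacent transposition.} Both $M$ and $N$ are words of length $2k+2$ with exactly $k+1$ letters $X$ and $k+1$ letters $Y$. Hence $N$ is obtained from $M$ by a finite chain of transpositions of adjacent letters,
\[
M=M_0,\,M_1,\,\ldots,\,M_r=N .
\]
We may assume each step swaps an adjacent pair $XY$ into $YX$ or the reverse, since a swap of two equal letters changes nothing. Then
\[
M-N=\sum_{s}(M_{s-1}-M_s)
\]
is a telescoping sum, and $I$ is linear. So it suffices to treat $M=W_1ABW_2$ and $N=W_1BAW_2$ with $\{A,B\}=\{X,Y\}$. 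Up to an overall sign, $M-N=W_1(XY-YX)W_2$.

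\emph{Moving the commutator to the end.} Let $\eta\in S_{2k+2}$ be a permutation of positions that carries the two positions of the swapped pair to the last two positions, keeping their relative order. Then
\[
W_1(XY-YX)W_2=\eta\curvearrowright\big(W(XY-YX)\big),
\]
where $W$ is the balanced word of length $2k$ formed from $W_1W_2$. By Lemma \ref{etaI}, $I(M-N)=\pm\,\eta\curvearrowright I(Wc)$. The target span is closed under every permutation in $S_{2k+2}$, so it is enough to show $I(Wc)$ lies in it.

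\emph{Factoring out the invariant.} The action of $u\in SU(2)$ is a substitution, so it is multiplicative: $u\cdot(Wc)=(u\cdot W)(u\cdot c)$. A direct computation gives
\[
u\cdot c=\det(u)\,c=c .
\]
Therefore
\[
I(Wc)=\int_{SU(2)}(u\cdot W)\,c\,dh(u)=I(W)\,c .
\]
Now $I(W)\in\mathbb{C}^{SU(2)}_{2k}$, so by the induction hypothesis
\[
I(W)=\sum_j a_j\,\eta_j\curvearrowright c^k,\qquad \eta_j\in S_{2k}.
\]
Extend each $\eta_j$ to $\tilde\eta_j\in S_{2k+2}$ by fixing the last two positions. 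Then
\[
(\eta_j\curvearrowright c^k)\,c=\tilde\eta_j\curvearrowright c^{k+1},
\]
so $I(Wc)$ lies in the required span.

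The step I expect to need the most care is the bookkeeping in the second step. One has to check that the positional action $\curvearrowright$ commutes with $I$ as Lemma \ref{etaI} asserts, and that conjugating the pair to the end does not secretly reverse it, since a reversal would only introduce a sign, which is harmless. The key observation that makes everything work is the multiplicativity identity $I(Wc)=I(W)c$.
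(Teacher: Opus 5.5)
Your proof is correct and follows essentially the same argument as the paper. Both write $M-N$ as a telescoping sum over adjacent swaps, permute the commutator into a fixed position via Lemma~\ref{etaI} (the paper moves it to the front, you move it to the end), factor out the invariant $XY-YX$ to get $I(Wc)=I(W)\,c$, and then apply the induction hypothesis. You also correctly read the target span as the $S_{2k+2}$-orbit of $(XY-YX)^{k+1}$.
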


        \begin{proof}
            Since $M(X,Y)$ is a product of $X$'s and $Y$'s, let $M = PYXQ$ and suppose $N=PXYQ$ for two words $P,Q$ in $X,Y$ then $M-N=P(YX-XY)Q$.

            Suppose the degree of $P$ is $i-1$. Let $\eta = (1, i)(2, i+1)\in S_{2k+2}$, then
            \begin{equation*}
                \eta\curvearrowright\Big(P(X,Y)(YX-XY)Q(x,Y)\Big)=(YX-XY)P(X,Y)Q(x,Y)
            \end{equation*}

            By Lemma \ref{etaI},
            \begin{align*}
                I(M-N) &= \eta^{-1}\curvearrowright I(\eta \curvearrowright (M-N))\\
                &= \eta^{-1}\curvearrowright \int_{SU(2)}\eta\curvearrowright(M-N)\circ u \;dh(u)\\
                &=\eta^{-1}\curvearrowright\int_{SU(2)}\eta\curvearrowright\Big(P(X,Y)(YX-XY)Q(x,Y)\Big)\circ u \;dh(u)\\
                &=\eta^{-1}\curvearrowright\int_{SU(2)}\Big((YX-XY)P(X,Y)Q(x,Y)\Big)\circ u \;dh(u)\\
                &=\eta^{-1}\curvearrowright\int_{SU(2)}\Big((YX-XY)\circ u\Big)\Big(P(X,Y)Q(x,Y)\Big)\circ u \;dh(u)\\
                &=\eta^{-1}\curvearrowright\int_{SU(2)}(YX-XY)\Big(P(X,Y)Q(x,Y)\Big)\circ u \;dh(u)\\
                &=\eta^{-1}\curvearrowright\Big((YX-XY)\int_{SU(2)}\Big(P(X,Y)Q(x,Y)\Big)\circ u \;dh(u)\Big)\\
                &=\eta^{-1}\curvearrowright\Big((YX-XY)I(PQ)\Big)
            \end{align*}
            Since $PQ$ is a degree $2k$ polynomial, $I(PQ)$ is an $SU(2)$ invariant non commutative. By the assumption,
            \begin{equation*}
                I(PQ)\in \mathbb{C}_{2k}^{SU(2)} = span\{\rho\curvearrowright(XY-YX)^{2k}\;|\;\rho\in S_{2k}\}
            \end{equation*}
            Then
            \begin{equation*}
                (YX-XY)I(PQ)\in span\{\rho\curvearrowright(XY-YX)^{2k+2}\;|\;\rho\in S_{2k+2}\}
            \end{equation*}
            Thus
            \begin{equation*}
                I(M-N)=\eta^{-1}\curvearrowright(YX-XY)I(PQ)\in span\{\rho\curvearrowright(XY-YX)^{2k+2}\;|\;\rho\in S_{2k+2}\}
            \end{equation*}
            For arbitrary $M$ in $\mathbb{C}^{SD(2)}_{2k+2}$, there is a finite sequence $M=A_0, A_1,\ldots,A_k=N$ such that each neighboring pair $(A_j,A_{j+1})$ satisfy that $A_j = PYXQ$ and $A_{j+1}=PXYQ$
        \end{proof}

        \begin{lemma}\label{Ikernel}
            If $G$ is a monomial in $\mathbb{C}<X,Y>\ominus\;\mathbb{C}^{SD(2)}<X,Y>$,
            \begin{equation*}
                \int_{SU(2)}G\circ u\;dh(u)=0
            \end{equation*}
        \end{lemma}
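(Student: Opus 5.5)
The plan is to use the invariance of the Haar measure under the diagonal torus $SD(2)\subset SU(2)$. Let $G=\overline{A_1\cdots A_m}$ be a word in which $X$ occurs $a$ times and $Y$ occurs $b$ times, with $a\neq b$. This is exactly what it means for $G$ to lie in the complement of $\mathbb{C}^{SD(2)}\langle X,Y\rangle$, by Theorem \ref{balancedegree}. Write $J(G):=\int_{SU(2)}G\circ u\,dh(u)$. For any fixed $v\in SU(2)$, the computation after Definition \ref{def:avgI} gives $v\cdot J(G)=J(G)$, since $dh$ is invariant under right translation.

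Next I compute the action of a torus element directly. Take $D=\mathrm{diag}(e^{i\theta},e^{-i\theta})\in SD(2)$. Then $D\cdot G=e^{i(a-b)\theta}G$, because each $X$ contributes $e^{i\theta}$ and each $Y$ contributes $e^{-i\theta}$. Now use left invariance of $h$: replacing $u$ by $uD$ in the integral, in the same multiplication convention as the paper's computation of $v(I(f))$, gives
\begin{equation*}
J(G)=\int_{SU(2)}(uD)\cdot G\,dh(u)=\int_{SU(2)}u\cdot(D\cdot G)\,dh(u)=e^{i(a-b)\theta}J(G).
\end{equation*}
The second equality uses that $u\cdot(\,\cdot\,)$ is linear and that the actions compose. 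Since $a\neq b$, we can choose $\theta$ with $e^{i(a-b)\theta}\neq 1$, for instance $\theta=\pi/(a-b)$. This forces $J(G)=0$.

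The only delicate point is the order of composition. We need $(uD)\cdot G=u\cdot(D\cdot G)$, not $D\cdot(u\cdot G)$. If the paper's convention is the other one, I would instead use right invariance of $h$ and write $J(G)=\int (Du)\cdot G\,dh$. In either case the scalar $e^{i(a-b)\theta}$ factors out, because $D$ acts on the word $G$ itself before the substitution by $u$, and $u\cdot$ is linear. Since $SU(2)$ is compact, $h$ is bi-invariant, so both versions are legitimate.

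As an alternative check, one can argue through matrix entries. Expanding $G\circ u$ in words gives coefficients $\int\prod u_{A_iB_i}\,dh$. The substitution $u\mapsto Du$ multiplies every entry in the $X$-row by $e^{i\theta}$ and every entry in the $Y$-row by $e^{-i\theta}$. Each coefficient therefore picks up the factor $e^{i(a-b)\theta}$, so each coefficient vanishes.
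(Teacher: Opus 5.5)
Your argument is correct and is essentially the paper's proof: translate the Haar measure by $\mathrm{diag}(e^{i\theta},e^{-i\theta})$ so that the torus element acts on the word $G$ before the substitution by $u$, then pull out the scalar $e^{i(a-b)\theta}\neq 1$. Under the paper's convention $v\cdot(u\cdot f)=(uv)\cdot f$, it is your fallback $\int G\circ(Du)\,dh(u)=\int (G\circ D)\circ u\,dh(u)$ that does this, matching the paper's $G\circ(vu)=G\circ v\circ u$; your first displayed chain would only give $D\cdot J(G)=J(G)$, which is torus-invariance of $J(G)$ rather than vanishing.
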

        \begin{proof}\label{non homo int}
            Let $v = \begin{pmatrix}
                e^{i\theta}&0\\0&e^{-i\theta}
            \end{pmatrix} \in SU(2)$. Then
            \begin{align*}
                &\int_{SU(2)}G\circ u\;dh(u)\\
                =&\int_{SU(2)}G\circ (vu)\;dh(u)\\
                =&\int_{SU(2)}G\circ v\circ u\;dh(u)\\
            \end{align*}

            Given that $\mathbb{C}^{SD(2)}<X,Y>$ contains all words in $X,Y$ such that the degree of $X$ equals the degree of $Y$, the degree of $X$ and the degree of $Y$ are different in $G$.

            Consider $G\circ v$. $v$ maps $(X,Y)$ to $(e^{i\theta}X,e^{-i\theta}Y)$, so $G\circ v = e^{ik\theta}G$, where $k$ is the difference between the degree of $X$ and the degree of $Y$. Since $G \in \mathbb{C}<X,Y>\ominus\mathbb{C}^{SD(2)}<X,Y>$, $k\neq 0$. So,
            \begin{align*}
                &\int_{SU(2)}G\circ u\;dh(u)\\
                =&\int_{SU(2)}G\circ v\circ u\;dh(u)\\
                =&\int_{SU(2)}(e^{ik\theta}G)\circ u\;dh(u)\\
                =&e^{ik\theta}\int_{SU(2)}G\circ u\;dh(u)\\
            \end{align*}
            This hold for all $\theta$, so $\int_{SU(2)}G\circ u\;dh(u)=0$
        \end{proof}

        \begin{theorem}\label{part1}
            $\mathbb{C}_{2n}^{SU(2)} = span\{ \eta \curvearrowright(XY-YX)^n | \eta\in S_{2n}\}$
        \end{theorem}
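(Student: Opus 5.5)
We argue by induction on $n$, using the averaging operator $I$ of Definition~\ref{def:avgI} together with Lemmas~\ref{etaI}, \ref{I(M-N)} and \ref{Ikernel}. The inclusion $\supseteq$ is already established in the discussion preceding Theorem~\ref{conjSU2}. There we saw that $(XY-YX)^n$ is $SU(2)$-invariant, since $XY-YX$ transforms by $\det u=1$. By Lemma~\ref{etaI}, $I(\eta\curvearrowright f)=\eta\curvearrowright I(f)$, so any permutation of an invariant is again invariant: if $I(f)=f$ then $I(\eta\curvearrowright f)=\eta\curvearrowright f$. So only $\subseteq$ needs work. The base case $n=0$ is trivial (constants), and $n=1$ is a direct check: $\mathbb{C}^{SD(2)}_2$ is spanned by $XY,YX$, and only $XY-YX$ survives averaging.

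For the inductive step, assume $\mathbb{C}_{2k}^{SU(2)}=\mathrm{span}\{\eta\curvearrowright(XY-YX)^k\}$. Let $f\in\mathbb{C}^{SU(2)}_{2k+2}$. Since $f$ is invariant, $f=I(f)$. Write $f$ as a combination of words and split it into balanced words (in $\mathbb{C}^{SD(2)}$) and unbalanced ones. By Lemma~\ref{Ikernel}, $I$ kills the unbalanced words, so
\[
f=I(f)=\sum_W a_W\, I(W),
\]
with $W$ ranging over balanced words of length $2k+2$.

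Fix a reference balanced word $M_0=X^{k+1}Y^{k+1}$. For every balanced $W$, the chain of adjacent transpositions $YX\leftrightarrow XY$ noted at the end of the proof of Lemma~\ref{I(M-N)} gives $I(W)-I(M_0)\in S:=\mathrm{span}\{\rho\curvearrowright(XY-YX)^{k+1}\}$. Each step is a difference $I(PYXQ)-I(PXYQ)$, which lies in $S$ by that lemma applied with the inductive hypothesis. (The lemma's exponent should read $k+1$; this is the intended statement, and its proof yields exactly that.) Hence
\[
f\equiv\Big(\sum_W a_W\Big) I(M_0)\pmod S .
\]

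The remaining obstacle is to show $I(M_0)\in S$ itself, or equivalently that the coefficient term is harmless. I would handle this by showing that $\sum_\sigma \sigma\curvearrowright I(M_0)$ and $I(M_0)$ differ by an element of $S$. Concretely, symmetrize: $\sum_{\sigma\in S_{2k+2}}\sigma\curvearrowright M_0$ is, up to a scalar, the polarization of the commutative monomial $x^{k+1}y^{k+1}$. Its average under $I$ is the symmetrization of $I$ applied to a commutative monomial. There are no nonzero $SU(2)$-invariant symmetric tensors in $\mathrm{Sym}^{2k+2}V$ (the binary form $x^{k+1}y^{k+1}$ averages to $0$, as one checks with the maximal torus plus the Weyl element $\begin{pmatrix}0&1\\-1&0\end{pmatrix}$ and an $SO(2)$ rotation), so this average is $0$. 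Using Lemma~\ref{etaI} and the fact that each $\sigma\curvearrowright I(M_0)\equiv I(M_0)\pmod S$, this gives $\binom{2k+2}{k+1}(k+1)!^2\, I(M_0)\in S$, hence $I(M_0)\in S$. The step I expect to be most delicate is this vanishing of the fully symmetric average. I would verify it by computing $I$ on $(X+tY)^{2k+2}$ for scalar $t$: invariance under the group forces this average to be a polynomial invariant of degree $2k+2>0$ in one vector, which must vanish. Combining the pieces gives $f\in S$, closing the induction.
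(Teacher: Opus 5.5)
Your proof is correct, and it follows the paper's skeleton: induction, Lemma~\ref{Ikernel} to drop unbalanced words, and the chain of $YX\to XY$ swaps with Lemma~\ref{I(M-N)} to make all $I(W)$ congruent modulo $S$. It then uses the vanishing of $I(\Sigma)$, where $\Sigma$ is the sum of all balanced words of length $2k+2$, to force $I(M_0)\in S$. (Your full symmetrization of $M_0$ is $(k+1)!^2\Sigma$.)

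The real difference is how that vanishing is proved. The paper uses Haar invariance under the single element $v=\tfrac{1}{\sqrt2}\begin{pmatrix}1&i\\ i&1\end{pmatrix}$. Expanding $\Sigma\circ v$ and discarding unbalanced words gives $I(\Sigma)=c\,I(\Sigma)$, where $c$ is the common column sum of the coefficients, and the paper asserts $c=0$. In fact the coefficient of $M_l$ in $M_j\circ v$ is $2^{-(k+1)}(-1)^m$, where $m$ is the number of $X$'s of $M_l$ that are $Y$'s in $M_j$. Hence $c=2^{-(k+1)}\sum_m(-1)^m\binom{k+1}{m}^2$, which is nonzero whenever $k+1$ is even (for instance $c=-\tfrac12$ when $k=1$). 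The paper's step survives only because $|c|<1$, so $c\neq 1$ and $I(\Sigma)=0$ still follows.

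Your route avoids this count. Since $I$ commutes with permutations, $I(\Sigma)$ is a symmetric invariant, and $I$ kills all $S_N$-symmetric polynomials of positive degree $N$. The cleanest way to make your one-vector argument airtight is as follows. For a nonzero linear form $\ell=\alpha X+\beta Y$, choose $g\in SU(2)$ with $\ell\circ g=e^{i\phi}\ell$; take $g$ with eigenvalues $e^{\pm i\phi}$ having $(\alpha,\beta)$ as an eigenvector, transposed to match the substitution convention, using that $SU(2)$ is closed under transposition. Then $I(\ell^N)=I(\ell^N\circ g)=e^{iN\phi}I(\ell^N)$, so $I(\ell^N)=0$, and the powers $\ell^N$ span the symmetric polynomials of degree $N$.

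The paper's version keeps to one explicit group element and a finite coefficient computation, in line with its ``direct Haar integral'' program. Yours is more conceptual, shorter to check, and insensitive to the parity of $k$.
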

        \begin{proof}
            When $n=1$, by the case of $SO(2)$ invariance, $\mathbb{C}_2^{SO(2)}$ is a dimension 2 linear space generated by $X^2+Y^2$ and $XY-YX$.

            Under an arbitrary $SU(2)$ action $u = \begin{pmatrix}
                a&b\\-\bar{b}&\bar{a}
            \end{pmatrix}$ with $|a|^2+|bI^2=1$, $u\cdot (X,Y) = (aX+bY,-\bar{b}X+\bar{a}Y)$. Then for $f_1(X,Y) = X^2+Y^2$,
            \begin{align*}
                &f_1\circ u(X,Y)\\
                =&f_1(aX+bY,-\bar{b}X+\bar{a}Y)\\
                =&(aX+bY)^2+(-\bar{b}X+\bar{a}Y)^2\\
                =&(a^2+\bar{b}^2)X^2+(b^2+\bar{a}^2)Y^2+(ab-\overline{ab})XY+(ab-\overline{ab})YX\\
                \neq&X^2+Y^2
            \end{align*}
            Note that when both $a$ and $b$ are real numbers, the last inequality becomes an equality. This aligns with our case of $SO(2)$ invariance.

            Similarly, consider $f_2(X,Y) = XY-YX$,
            \begin{align*}
                &f_2\circ u(X,Y)\\
                =&f_2(aX+bY,-\bar{b}X+\bar{a}Y)\\
                =&(aX+bY)(-\bar{b}X+\bar{a}Y) - (-\bar{b}X+\bar{a}Y)(aX+bY)\\
                =&(-a\bar{b}+a\bar{b})X^2+(b\bar{a}-b\bar{a})YX+(a\bar{a}+b\bar{b})XY-(a\bar{a}+b\bar{b})YX\\
                =&XY-YX
            \end{align*}
            Therefore $\mathbb{C}_2^{SU(2)}$ is the 1 dimensional linear subspace generated by $XY-YX$.

            Since the only non trivial element in $S_2$ is $\eta=(1,2)$ and
            \begin{equation*}
                \eta \curvearrowright (XY-YX) = YX-XY = -(XY-YX)
            \end{equation*}
            So $\mathbb{C}_2^{SU(2)} = span\{\eta\curvearrowright(XY-YX) | \eta\in S_{2}\}$

            Now assume when $n=k$ the theorem holds, that is
            \begin{equation*}
                \mathbb{C}_{2k}^{SU(2)} = span\{ \eta \curvearrowright(XY-YX)^k | \eta\in S_{2k}\}
            \end{equation*}
            It remains to show
            \begin{equation*}
                \mathbb{C}_{2k+2}^{SU(2)} = span\{ \eta \curvearrowright(XY-YX)^{k+1} | \eta\in S_{2k+2}\}
            \end{equation*}

            Since $SD(2)\subset SU(2)$,
            \begin{equation*}
                \mathbb{C}_{2k+2}^{SU(2)}\subset\mathbb{C}_{2k+2}^{SD(2)}
            \end{equation*}
            Therefore for any non commutative polynomial $F(X,Y)\in \mathbb{C}_{2k+2}^{SU(2)}$,
            \begin{equation*}
                F = \sum c_jM_j
            \end{equation*}
            where $M_j\in\mathbb{C}_{2k+2}^{SD(2)}$ are words in $X$ and $Y$.

            Then
            \begin{equation*}
                F = I(F) = I(\sum c_jM_j) = \sum c_jI(M_j)
            \end{equation*}

            By Lemma \ref{I(M-N)}, for any two $I(M_j)$, there difference is some element in $span\{ \eta \curvearrowright(XY-YX)^{2k+2} | \eta\in S_{2k+2}\}$. Therefore we only need to show $I(M)\in span\{ \eta \curvearrowright(XY-YX)^{2k+2} | \eta\in S_{2k+2}\}$ for one choice of $M\in \mathbb{C}_{2k+2}^{SD(2)}$.

            Fix a $M\in \mathbb{C}_{2k+2}^{SD(2)}$. Consider $\binom{2k+2}{k+1}I(M)$. Let $\Big\{M_j\Big\}_{j=1}^{\binom{2k+2}{k+1}}$. By Lemma \ref{I(M-N)},
            \begin{align*}
                \binom{2k+2}{k+1}I(M) &= F+I(\sum_{j=1}^{\binom{2k+2}{k+1}}M_j)\\
                &= F+\int_{SU(2)}\sum_{j=1}^{\binom{2k+2}{k+1}}M_j\circ u\;dh(u)
            \end{align*}
            Here $F\in span\{ \eta \curvearrowright(XY-YX)^{2k+2} | \eta\in S_{2k+2}\}$.
            Let $v = \begin{pmatrix}
                \frac{1}{\sqrt{2}}&\frac{i}{\sqrt{2}}\\ \frac{i}{\sqrt{2}}&\frac{1}{\sqrt{2}}
            \end{pmatrix} \in SU(2)$. Since $dh$ is Haar measure,
            \begin{equation*}
                \int_{SU(2)} F\circ u\;dh(u) = \int_{SU(2)}(F\circ v)\circ u\;dh(u)
            \end{equation*}
            Therefore
            \begin{align*}
                \binom{2k+2}{k+1}I(M) &=F+\int_{SU(2)}\sum_{j=1}^{\binom{2k+2}{k+1}}M_j\circ u\;dh(u)\\
                &= F+\int_{SU(2)}\sum_{j=1}^{\binom{2k+2}{k+1}}(M_j\circ v)\circ u\;dh(u)\\
                &= F+\int_{SU(2)}\sum_{j=1}^{\binom{2k+2}{k+1}}\Big(\sum_{k=1}^{\binom{2k+2}{k+1}}\Delta(M_j,M_k)M_k\Big)\circ u\;dh(u) + \int_{SU(2)}G\circ u\;dh(u)\\
                &= F+\int_{SU(2)}\sum_{k=1}^{\binom{2k+2}{k+1}}\Big(\sum_{j=1}^{\binom{2k+2}{k+1}}\Delta(M_j,M_k)\Big)M_k\circ u\;dh(u) + \int_{SU(2)}G\circ u\;dh(u)
            \end{align*}
            Here $\Delta(M_j,M_k)$ is defined as $\frac{(-1)^{\delta(M_j,M_k)}}{2^{k+1}}$, and $\delta(M_j,M_k)$ is the number of places at which $M_j$ is different from $M_k$ given that $M_j$'s are words in $X$ and $Y$. The function $G$ in the third term is a non commutative polynomial in $\mathbb{C}_{2k+2}\ominus\mathbb{C}_{2k+2}^{SD(2)}$. By \ref{non homo int}, $\int_{SU(2)}G\circ u\;dh(u)=0$

            Thus we only need to check the second term. Because
            \begin{equation*}
                \sum_{j=1}^{\binom{2k+2}{k+1}}\Delta(M_j,M_k)=\frac{1}{2^{k+1}}\Big(\sum_{n=0}^{k+1}\binom{k+1}{n}(-1)^N\Big)=0
            \end{equation*}
            the second term is also zero.

            Thus
            \begin{equation*}
                \binom{2k+2}{k+1}I(M) =F \in span\{ \eta \curvearrowright(XY-YX)^{2k+2} | \eta\in S_{2k+2}\}
            \end{equation*}

            This completes the induction.
        \end{proof}

        Notice that $\frac{1}{n+1}\binom{2n}{n}$ is the n-th Catalan number. We show the second part of Theorem \ref{conjSU2} by the following inductive construction.

        \begin{theorem}\label{part2}
            Define $P_n$ in the following way,
            \begin{equation}
                P_n = \{ XAYB-YAXB\Big|A\in P_k, B\in P_{n-k-1}  \}
            \end{equation}
            Then $P_n$ is a basis for $\mathbb{C}^{SU(2)}_{2n}$
        \end{theorem}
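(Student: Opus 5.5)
Throughout, take $P_0=\{1\}$ and let $k$ range over $0,\dots,n-1$ in the definition of $P_n$. There are three things to prove: every element of $P_n$ is $SU(2)$-invariant, $|P_n|=C_n$ and the elements are linearly independent, and $P_n$ spans $\mathbb{C}^{SU(2)}_{2n}$.

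\textbf{Invariance.} Each $XAYB-YAXB$ is obtained from $(XY-YX)\,AB$ by a permutation of letter positions. Concretely, take $\eta\in S_{2n}$ that moves position $2$ to position $2k+2$ and shifts the letters of $A$ one slot to the left. Then $\eta\curvearrowright\big((XY-YX)AB\big)=XAYB-YAXB$. By induction, $A$ and $B$ lie in the spans of the $S_{2k}$- and $S_{2(n-k-1)}$-orbits of powers of $XY-YX$, so $(XY-YX)AB$ lies in the span of the $S_{2n}$-orbit of $(XY-YX)^n$. Hence so does $XAYB-YAXB$. By Theorem~\ref{part1} together with Lemma~\ref{etaI}, applied as in the proof of Lemma~\ref{I(M-N)}, this span is exactly $\mathbb{C}^{SU(2)}_{2n}$. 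So every element of $P_n$ is invariant. (One can also check invariance directly: $u\cdot X\otimes u\cdot Y-u\cdot Y\otimes u\cdot X=\det(u)\,(X\otimes Y-Y\otimes X)$ in the two non-adjacent positions $1$ and $2k+2$.)

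\textbf{Counting and independence.} The recursion gives $|P_n|=\sum_{k}|P_k||P_{n-k-1}|$, which is the Catalan recursion, so $|P_n|=C_n$ provided the elements are distinct and independent. For independence I would use a leading-word argument. Give $X<Y$ and order words lexicographically. Expand $XAYB-YAXB$: the words beginning with $X$ come from $XAYB$, and by induction the leading word of $XAYB$ is $X\,\mathrm{lead}(A)\,Y\,\mathrm{lead}(B)$. With the convention that $\mathrm{lead}(XY-YX)=XY$, these leading words are Dyck words: read $X$ as an up-step and $Y$ as a down-step. The decomposition $X\,D_1\,Y\,D_2$ of a Dyck word, with $D_1,D_2$ Dyck, is unique (it is the first-return decomposition). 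So distinct elements of $P_n$ have distinct leading words. A triangularity argument then gives linear independence, provided each leading word appears with nonzero coefficient and is strictly larger than every other word in that element's expansion within the chosen ordering. That last point needs care: I would verify it by showing that the minimal word, not the maximal one, is the Dyck word, and phrasing the argument accordingly.

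\textbf{Spanning, the main obstacle.} It remains to show $\dim\mathbb{C}^{SU(2)}_{2n}\le C_n$. Theorem~\ref{part1} only gives a spanning set indexed by $S_{2n}$, so a reduction is needed. My plan is a rewriting argument. Every element $\eta\curvearrowright(XY-YX)^n$ is, up to sign, a product of antisymmetrized pairs $\epsilon_{ij}$ on a perfect matching of positions $\{1,\dots,2n\}$. The key identity is the three-term (Plücker) relation for crossing pairs $i<j<k<l$:
\[
\epsilon_{ik}\epsilon_{jl}=\epsilon_{ij}\epsilon_{kl}+\epsilon_{il}\epsilon_{jk}.
\]
This follows from $\omega$ being a $2\times2$ determinant and is a finite check on words in four positions. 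Each application strictly reduces the number of crossings, so every matching can be rewritten as a combination of non-crossing matchings, and there are $C_n$ of these. Finally, every non-crossing matching equals, up to sign, an element of $P_n$. Position $1$ is paired with some position $2k+2$; the positions strictly between them carry a non-crossing matching ($A$), and the positions after $2k+2$ carry another ($B$). So the non-crossing matchings correspond exactly to the recursive structure of $P_n$. Together with independence, this shows $P_n$ is a basis.
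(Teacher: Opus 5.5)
Your proof is correct. The independence half is essentially the paper's, but the spanning half takes a genuinely different route.

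\textbf{Independence.} The paper also argues triangularity against the Dyck words (its $Q_n$, built by the same recursion $XAYB$), using a recursive order on $(l,A,B)$. Your lexicographic order with $X<Y$ is cleaner, and you can drop the hedge. Every word of $YAXB$ begins with $Y$, and the middle $Y$ of $XAYB$ sits at the fixed position $2k+2$. So the lex-minimal word of $XAYB-YAXB$ is $X\min(A)\,Y\min(B)$, with coefficient $1$ by induction. You can phrase the triangularity with minimal words from the start.

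\textbf{Spanning.} The paper never uncrosses anything. It reuses the internals of the proof of Theorem~\ref{part1}, where $\mathbb{C}^{SU(2)}_{2n}$ is spanned by the differences $I(M-N)$. Each of these is $\pm(XY-YX)$ placed at two \emph{adjacent} positions of $I(PQ)\in\mathbb{C}^{SU(2)}_{2n-2}=\operatorname{span}(P_{n-1})$. Inserting an adjacent pair into $XA_jYB_j-YA_jXB_j$ lands in front, inside $A_j$, or inside $B_j$, and the inductive hypothesis in lower degree absorbs it. So no crossing ever arises.

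You instead use only the statement of Theorem~\ref{part1} and straighten arbitrary matchings with $\epsilon_{ik}\epsilon_{jl}=\epsilon_{ij}\epsilon_{kl}+\epsilon_{il}\epsilon_{jk}$; your signs are right. This is the classical Rumer--Teller--Weyl/Kauffman reduction.

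\textbf{What each route buys.} Yours gives an explicit integer expansion of every $\eta\curvearrowright(XY-YX)^n$ in the basis $P_n$, and it does not depend on how Theorem~\ref{part1} was proved. The paper's route stays inside its Haar-integral method and needs no crossing-reduction lemma.

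\textbf{The one unproved step.} You assert that each uncrossing strictly lowers the crossing number. This is true but deserves its short check. Take any third chord and ask which of the arcs cut out by $i<j<k<l$ contain its endpoints. A case check shows its crossings with either replacement pair are at most its crossings with $\{i,k\},\{j,l\}$. Meanwhile the crossing between $\{i,k\}$ and $\{j,l\}$ itself disappears, so the total strictly drops.
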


        For example,
        \begin{align*}
            P_0 &= \{1\}\\P_1&=\{XY-YX\}\\P_2 &= \{XY(XY-YX)-YX(XY-YX)\;,\;X(XY-YX)Y-Y(XY-YX)X\}
        \end{align*}

        Let $|P_n| = a_n$, then $a_n$ satisfy
        \begin{equation} \label{cat}
            a_n = \sum_{k=0}^{n-1}a_ka_{n-k-1}
        \end{equation}
        with $a_0=1$. By \cite{Cat} Chapter 1.2, \ref{cat} is the recurrence relation for Catalan numbers, so $a_n$ is exactly the n-th Catalan number $\frac{1}{n+1}\binom{2n}{n}$.

        \begin{proof}
            We use induction again to prove the theorem. It is true that
            \begin{equation}
                P_1 = \{ XAYB-YAXB\Big|A\in P_0, B\in P_{0}  \}=\{XY-YX\}
            \end{equation}
            Suppose
            \begin{equation}
                P_n = \{ XAYB-YAXB\Big|A\in P_k, B\in P_{n-k-1}  \}
            \end{equation}
            is true for all $n = 1,\ldots,k$. Consider the case of $n=k+1$.
            By the argument of \ref{I(M-N)}, if $M=PYXQ$ and $N=PXYQ$ with the degree of $P$ being $i-1$, then
            \begin{equation*}
                I(M-N)=\eta \curvearrowright(XY-YX)I(PQ)
            \end{equation*}
            Here $\eta$ is the permutation that swaps 1 with $i$, swaps 2 with $i+1$, and keeps every other nodes unchanged. We write $\eta = (1\;i)(2\;[i+1])$.

            Since $I(PQ)\in \mathbb{C}^{SU(2)}_{2k}$, by the induction hypothesis,
            \begin{equation*}
                I(PQ) = \sum c_j(XA_jYB_j-YA_jXB_j)
            \end{equation*}
            where each pair $(A_j,B_j)$ is in $P_l\times P_{k-l-1}$ for some $l$.
            Consider
            \begin{equation*}
                \eta \curvearrowright\Big(XY-YX)(XA_jYB_j-YA_jXB_j)\Big)\;.
            \end{equation*}
            Assume the degree of $A_j$ is $2l$ and the degree of $B_j$ is $2k-2l-2$. 
            
            If $i=1$, then
            \begin{equation*}
                \eta \curvearrowright\Big(XY-YX)(XA_jYB_j-YA_jXB_j)\Big) = (XY-YX)(XA_jYB_j-YA_jXB_j)
            \end{equation*}
            This is in the form of $XAYB-YAXB$ with
            \begin{align*}
                A&=1 \in P
                _0\\
                B &= XA_jYB_j-YA_jXB_j\in \mathbb{C}^{SU(2)}_{2k} \in span(P_k)
            \end{align*}

            If $2\leq i \leq 2l+1$, then
            \begin{align*}
                &\eta \curvearrowright\Big(XY-YX)(XA_jYB_j-YA_jXB_j)\Big)\\
                =& X\Big(\eta'\curvearrowright (XY-YX)A_j \Big)YB_j-Y\Big(\eta'\curvearrowright (XY-YX)A_j \Big)XB_j\;,
            \end{align*}
            where $\eta' = (1\;[i-1])(2\;i)\in S_{2l+2}$.

            So
            \begin{equation*}
                \eta \curvearrowright\Big(XY-YX)(XA_jYB_j-YA_jXB_j)\Big)
            \end{equation*}
            is also in form of $XAYB-YAXB$, with
            \begin{align*}
                A&=\eta'\curvearrowright (XY-YX)A_j \Big) \in span(P_{l+1})\\
                B&=B_j\in span(P_{k-l-1})\;.
            \end{align*}

            Similarly, if $i \geq 2l+2$, then
            \begin{align*}
                &\eta \curvearrowright\Big(XY-YX)(XA_jYB_j-YA_jXB_j)\Big)\\
                =& XA_jY\Big(\eta'\curvearrowright (XY-YX)B_j \Big)-YA_jX\Big(\eta'\curvearrowright (XY-YX)B_j \Big)\;,
            \end{align*}
            where $\eta' = (1\;[i-2l-2])(2\;[i-2l-1])\in S_{2k-2l}$. So $\eta \curvearrowright\Big(XY-YX)(XA_jYB_j-YA_jXB_j)\Big)$ is also in the form of $XAYB-YAXB$, with
            \begin{align*}
                A&=A_j \in span(P_{l})\\
                B&=\eta'\curvearrowright (XY-YX)B_j  \in span(P_{k-l})\;.
            \end{align*}

            Thus
            \begin{align*}
                span(P_n)&\subseteq span\{ \eta \curvearrowright(XY-YX)^{2n} | \eta\in S_{2n}\} \\&= \mathbb{C}_{2n}^{SU(2)}<X,Y>\\&\subseteq span(P_n)
            \end{align*}

            Hence $\mathbb{C}_{2n}^{SU(2)}<X,Y>\subseteq span(P_n)$.

            By the following lemma, elements in $P_n$ are linearly independent, therefore it can be concluded that the dimension of $\mathbb{C}^{SU(2)}_{2n}<X,Y>$ is the n-th Catalan number $a_n= \frac{1}{n+1}\binom{2n}{n}$
        \end{proof}

        \begin{lemma}
            Elements in $P_n$ are linearly independent.
        \end{lemma}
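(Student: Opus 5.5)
Linear independence of $P_n$ will be proved by exhibiting, for each element of $P_n$, a distinguished word that occurs in it with nonzero coefficient, chosen so that the resulting coefficient matrix is triangular with respect to a suitable total order on words. Every element of $P_n$ is a linear combination of words with $n$ letters $X$ and $n$ letters $Y$. Expanding $XAYB-YAXB$ recursively, an element $p\in P_n$ corresponds to a non-crossing pairing of $\{1,\dots,2n\}$. The outer pair consists of the first position and the position just after $A$. Inside each pair we place either $XY$ or $YX$, with sign $+$ or $-$. So $p$ equals the sum over all $2^n$ such choices of signed words. Its \emph{leading word} $w(p)$ is obtained by choosing $X$ at the left endpoint and $Y$ at the right endpoint of every pair. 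This is a Dyck word, read as $X=$ up-step and $Y=$ down-step, and it occurs in $p$ with coefficient $+1$. Indeed, a word determines its letters, and given the pairing, a word appears at most once among the $2^n$ terms, since each pair must carry distinct letters.

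The first step is to check that $p\mapsto w(p)$ is a bijection from $P_n$ onto Dyck words of length $2n$. I will argue by induction using the decomposition $w(p)=X\,w(A)\,Y\,w(B)$, which is exactly the first-return decomposition of Dyck words. This also recovers the count $a_n=C_n$ independently.

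The second step, which I expect to be the main obstacle, is triangularity. Every other word $v$ appearing in $p$ arises by flipping some pairs to $YX$. I claim $v$ is strictly smaller than $w(p)$ in the lexicographic order with $X>Y$. Indeed, at the leftmost flipped pair, the left endpoint changes from $X$ to $Y$, and all earlier positions agree with $w(p)$. So every word of $p$ other than $w(p)$ is lexicographically smaller than $w(p)$.

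Hence, ordering $P_n$ by the lexicographic order of the leading words, each $p$ contains its own leading word, and all other words in $p$ are strictly smaller. Now suppose $\sum c_p p=0$ with not all $c_p$ zero. Take $p_0$ with $c_{p_0}\neq 0$ whose leading word is lexicographically maximal among those with nonzero coefficient. The word $w(p_0)$ cannot appear in any other $p$ with $c_p\ne0$. If it did, either $w(p)=w(p_0)$, which contradicts the bijection, or $w(p_0)<w(p)$, which contradicts maximality. So the coefficient of $w(p_0)$ in the sum is $c_{p_0}\neq0$, a contradiction.

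The delicate point is making the recursive description of the words of $p$ as signed flips of a non-crossing pairing precise. I would verify this by induction on $n$ directly from the definition $XAYB-YAXB$, tracking that the outer $X\cdots Y$ pair contributes sign $+$ and the $Y\cdots X$ pair contributes sign $-$.
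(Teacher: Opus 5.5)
Your proof is correct, and its overall strategy matches the paper's. The paper's set $Q_n=\{XAYB\mid A\in Q_l,\ B\in Q_{n-l-1}\}$ is exactly your set of leading Dyck words. The paper likewise argues that the coefficient matrix of $P_n$ on the columns $Q_n$ is unitriangular.

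You differ in the order used and in how triangularity is justified. The paper orders both sides recursively (by $l$, then $A$, then $B$). In that order $C_n$ is block lower triangular with diagonal blocks $C_l\otimes C_{n-l-1}$, so unitriangularity follows by induction on $n$. However, the paper only asserts that the entries above the diagonal vanish. It omits the needed observation that a Dyck word $XaYb$ coming from $XAYB$, with $a$ not itself a Dyck word, has an earlier first return and therefore lies in an earlier block. (Its displayed $C_3$ also misprints the last diagonal entry as $0$; $p_{3,5}$ contains $XXXYYY$ with coefficient $+1$.)

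Your lexicographic order with $X>Y$ is genuinely different from the paper's. For example, $XXXYYYXY$ and $XXYXYXYY$ are ranked oppositely by the two orders. Combined with your signed expansion over orientations of the pairing, it makes triangularity a one-line observation that needs no induction. Your bijection step also shows explicitly that distinct pairs $(A,B)$ give distinct elements of $P_n$. That fact is what the paper tacitly relies on when it counts $|P_n|$ by the Catalan recurrence. The paper's route, in exchange, stays entirely within the recursive definition and needs no pairing picture.
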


        \begin{proof}
            In order to show elements in $P_n$ are linearly independent, we only need to show the coefficient matrix determined by polynomials in $P_n$ has full rank.

            Instead of checking all coefficients in each polynomial in $P_n$, we show it is sufficient to consider a smaller number of coefficients, so only a square matrix need to be checked. This smaller matrix will be lower triangular and all entries on diagonal will be 1.

            We first give the polynomials in $P_n$ an ordering. Recall that all element in $P_n$ are of the form
            \begin{equation*}
                p(X,Y) = XAYB-YAXB
            \end{equation*}
            with $A\in P_l$, $B\in P_{n-l-1}$.

            Define an order on $P_n$ inductively by the following procedure. If $p_j(X,Y)=XA_jYB_j-YA_jXB_j$ with $A\in P_{l_j}$, $B\in P_{n-l_j-1}$, then $p_j \prec p_k$ when (1) $l_j<l_k$, or (2) $l_j=l_k$ and $A_j \prec A_k$, or (2) $l_j=l_k$ and $A_j = A_k$ and $B_j\prec B_k$. This is a total order, so for all elements in $P_n$, there is an indexing $p_{n,j}$ such that if $j<k$, then $p_{n,j} \prec p_{n,k}$.

            For example,
            \begin{align*}
                p_{0,1} &= 1\\ \\
                p_{1,1} &= Xp_{0,1}Yp_{0,1} - Yp_{0,1}Xp_{0,1} = XY-YX\\
                p_{2,1} &= Xp_{0,1}Yp_{1,1} - Yp_{0,1}Xp_{1,1}\\
                &=XY(XY-YX)-YX(XY-YX) = (XY-YX)^2\\
                p_{2,2} &= Xp_{1,1}Yp_{0,1} - Yp_{1,1}Xp_{0,1}\\
                &= X(XY-YX)Y-Y(XY-YX)X\\ \\
                p_{3,1} &= Xp_{0,1}Yp_{2,1} - Yp_{0,1}Xp_{2,1}\\
                &= XY\Big(XY(XY-YX)-YX(XY-YX)\Big)\\&-YX\Big(XY(XY-YX)-YX(XY-YX)\Big) = (XY-YX)^3\\
                p_{3,2} &= Xp_{0,1}Yp_{2,2} - Yp_{0,1}Xp_{2,2}\\
                &=XY\Big(X(XY-YX)Y-Y(XY-YX)X\Big)\\&-YX\Big(X(XY-YX)Y-Y(XY-YX)X\Big)\\
                p_{3,3} &= Xp_{1,1}Yp_{1,1} - Yp_{1,1}Xp_{1,1}\\
                &=X(XY-YX)Y(XY-YX)-Y(XY-YX)X(XY-YX)\\
                p_{3,4} &= Xp_{2,1}Yp_{0,1} - Yp_{2,1}Xp_{0,1}\\
                &=X\Big(XY(XY-YX)-YX(XY-YX)\Big)Y\\
                &-Y\Big(XY(XY-YX)-YX(XY-YX)\Big)X\\
                p_{3,5} &= Xp_{2,2}Yp_{0,1} - Yp_{2,2}Xp_{0,1}\\
                &=X\Big(X(XY-YX)Y-Y(XY-YX)X\Big)Y\\
                &-Y\Big(X(XY-YX)Y-Y(XY-YX)X\Big)X\\ \\
                p_{4,1} &= Xp_{0,1}Yp_{3,1} - Yp_{0,1}Xp_{3,1}\\
                &= (XY-YX)^4
            \end{align*}

            Now in a similar manner, define a set of words $Q_n$ by
            \begin{equation*}
                Q_n = \{XAYB| A\in Q_l, B\in Q_{n-l-1}\}
            \end{equation*}
            Again, an ordering can be defined on $Q_n$. If $q_j(X,Y)=XA_jYB_j$ with $A\in Q_{l_j}$, $B\in Q_{n-l_j-1}$, then $q_j \prec q_k$ when (1) $l_j<l_k$, or (2) $l_j=l_k$ and $A_j \prec A_k$, or (2) $l_j=l_k$ and $A_j = A_k$ and $B_j\prec B_k$. This is a total order, so for all elements in $Q_n$, there is an indexing $q_{n,j}$ such that if $j<k$, then $q_{n,j} \prec q_{n,k}$.

            For example,
            \begin{align*}
                q_{0,1} &=1\\
                q_{1,1} &= Xq_{0,1}Yq_{0,1} = XY\\
                q_{2,1} &= Xq_{0,1}Yq_{1,1}= XYXY\\
                q_{2,2} &= Xq_{1,1}Yq_{0,1}= XXYY\\
                q_{3,1} &= Xq_{0,1}Yq_{2,1}= XYXYXY\\
                q_{3,2} &= Xq_{0,1}Yq_{2,2}= XYXXYY\\
                q_{3,3} &= Xq_{1,1}Yq_{1,1}= XXYYXY\\
                q_{3,4} &= Xq_{2,1}Yq_{0,1}= XXYXYY\\
                q_{3,5} &= Xq_{2,2}Yq_{0,1}= XXXYYY\\
            \end{align*}

            Now a polynomial in $P_n$ can be written as
            \begin{equation*}
                p_{n,j}(X,Y) = \sum_{k}c_{n,j,k}q_{n,k}(X,Y) + W
            \end{equation*}
            where $W$ is a polynomial made of words that are not in $Q_n$. We want to check the rank of matrix $C_n=\{c_{n}\}_{j,k}$.

            For example,
            \begin{align*}
                C_0 &= \begin{pmatrix}
                    1
                \end{pmatrix}\\
                C_1 &= \begin{pmatrix}
                    1
                \end{pmatrix}\\
                C_2 &= \begin{pmatrix}
                    1&0\\-1&1
                \end{pmatrix}\\
                C_3 &= \begin{pmatrix}
                    1&0&0&0&0\\
                    -1&1&0&0&0\\
                    -1&0&1&0&0\\
                    1&-1&-1&1&0\\
                    -1&0&0&-1&0
                \end{pmatrix}
            \end{align*}

            Along the diagonal of $C_n$, there are block matrices $C_0\otimes C_{n-1}$, $C_1\otimes C_{n-2}$, ..., $C_{n-1}\otimes C_0$, so the diagonal of $C_n$ are all one's. For the entries above diagonal, by the ordering of $p_{n,j}$ and $q_{n,j}$, they are all zero's. So $det(C_n)=1\neq 0$, hence $P_n$ the set of nc polynomials are linearly independent.
        \end{proof}

        Theorem \ref{part1} and \ref{part2} combined yield \ref{conjSU2}.

	\subsection{$SU(n)$}\label{sec:SUn}

We now let $SU(n)$ act on $X^{(n)}=(X_1,\dots,X_n)$ as in Section~1.2, and
write $f\circ u$ for $f(u\cdot X^{(n)})$. Permutations act on positions with
the convention of Lemma~\ref{etaI}: for a word $A_1A_2\cdots A_d$ and
$\eta\in S_d$,
\[
  \eta\curvearrowright(A_1A_2\cdots A_d)=A_{\eta^{-1}(1)}A_{\eta^{-1}(2)}\cdots A_{\eta^{-1}(d)},
\]
so the letter in position $p$ moves to position $\eta(p)$.

\begin{definition}\label{def:Dn-Ok}
Let
\[
  D_n:=\sum_{\eta\in S_n}\operatorname{sgn}(\eta)\,\eta\curvearrowright(X_1X_2\cdots X_n),
\]
and for $k\geq0$ let
\[
  \mathcal O_k:=\operatorname{span}\{\mu\curvearrowright D_n^{\,k}\mid \mu\in S_{kn}\}
  \subseteq\mathbb C_{kn}\langle X^{(n)}\rangle .
\]
\end{definition}

Thus $D_2=XY-YX$, and for $n=2$, $\mathcal O_k$ is the space appearing in
Theorem~\ref{part1}. Let $dh$ be the normalized Haar measure on $SU(n)$
and, as in Definition~\ref{def:avgI},
\[
  I(f):=\int_{SU(n)}f\circ u\,dh(u).
\]

\begin{lemma}\label{lem:SUn-I}
Let $f\in\mathbb C\langle X^{(n)}\rangle$ be homogeneous of degree $d$.
\begin{enumerate}
  \item[(a)] $I(f)\in\mathbb C^{SU(n)}\langle X^{(n)}\rangle$, and $I(f)=f$ if $f$ is
        $SU(n)$-invariant.
  \item[(b)] For $\eta\in S_d$ and $u\in SU(n)$,
        $(\eta\curvearrowright f)\circ u=\eta\curvearrowright(f\circ u)$; hence
        $I(\eta\curvearrowright f)=\eta\curvearrowright I(f)$.
  \item[(c)] If $f$ is a word not in $\mathbb C^{SD(n)}\langle X^{(n)}\rangle$, then $I(f)=0$.
\end{enumerate}
\end{lemma}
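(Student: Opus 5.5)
The three parts are routine consequences of the definition of $I$ and of the invariance of the Haar measure. I will follow the $SU(2)$ arguments (the computation after Definition~\ref{def:avgI}, Lemma~\ref{etaI}, Lemma~\ref{Ikernel}), now written for $n$ variables. Throughout, $f\circ u$ is a homogeneous polynomial of degree $d$ whose coefficients are polynomial, hence continuous, functions of the entries of $u$. So $I(f)$ is a well-defined element of $\mathbb C_d\langle X^{(n)}\rangle$, and integration commutes with any linear operation on the finite-dimensional space $\mathbb C_d\langle X^{(n)}\rangle$. I will use this freely.

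For (a), fix $v\in SU(n)$. The action is a right action on functions: $(f\circ u)\circ v=f\circ(uv)$, since $f\circ u$ means substituting $u\cdot X^{(n)}$. Then
\[
I(f)\circ v=\int_{SU(n)} f\circ(uv)\,dh(u)=\int_{SU(n)} f\circ u\,dh(u)=I(f),
\]
by right invariance of the Haar measure, which holds because $SU(n)$ is compact and so $h$ is bi-invariant. If $f$ is invariant, then $f\circ u=f$ for every $u$, and since $h$ is normalized, $I(f)=f$.

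For (b), I expand $f$ by linearity, so it suffices to take $f=A_1\cdots A_d$, a word. Substituting gives $f\circ u=(u\cdot A_1)\cdots(u\cdot A_d)$, where each $u\cdot A_p$ is a linear form $L_p$. Then $\eta\curvearrowright f=A_{\eta^{-1}(1)}\cdots A_{\eta^{-1}(d)}$, so
\[
(\eta\curvearrowright f)\circ u=L_{\eta^{-1}(1)}\cdots L_{\eta^{-1}(d)}.
\]
The right-hand side of (b) is obtained by expanding $L_1\cdots L_d=\sum c_{B_1\ldots B_d}\,B_1\cdots B_d$, where $c_{B_1\ldots B_d}=\prod_p u_{A_pB_p}$, and permuting positions in each word. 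The word $B_{\eta^{-1}(1)}\cdots B_{\eta^{-1}(d)}$ then carries the coefficient $\prod_p u_{A_pB_p}$. This coefficient equals $\prod_q u_{A_{\eta^{-1}(q)}B_{\eta^{-1}(q)}}$, which is exactly the coefficient that the word gets when we expand $L_{\eta^{-1}(1)}\cdots L_{\eta^{-1}(d)}$. The underlying point is that position-permutation commutes with substituting linear forms letterwise, because $\eta\curvearrowright$ is the linear map induced by the tensor-factor permutation on $V^{\otimes d}$, while $u$ acts diagonally as $u^{\otimes d}$. Integrating over $u$, and using that $\eta\curvearrowright$ is linear and commutes with the integral, gives $I(\eta\curvearrowright f)=\eta\curvearrowright I(f)$.

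For (c), I copy Lemma~\ref{Ikernel} using a torus element $v=\mathrm{diag}(e^{i\theta_1},\ldots,e^{i\theta_n})\in SD(n)\subset SU(n)$. By Theorem~\ref{balancedegree}, $f\notin\mathbb C^{SD(n)}$ means the letter degrees $m_1,\ldots,m_n$ of $f$ are not all equal. Then $f\circ v=e^{i\sum_j m_j\theta_j}f$, and by left invariance of $h$,
\[
I(f)=\int f\circ(vu)\,dh(u)=\int (f\circ v)\circ u\,dh(u)=e^{i\sum_j m_j\theta_j}I(f).
\]
Here I use the right-action convention as in (a), substituting $vu$ on the left of $u$. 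I need $\theta$ with $\sum\theta_j=0$ and $e^{i\sum m_j\theta_j}\neq1$. Because the $m_j$ are not all equal, there are indices $a,b$ with $m_a\neq m_b$. Taking $\theta_a=t$, $\theta_b=-t$ and all other $\theta_j=0$ gives the phase $e^{i(m_a-m_b)t}$, which is not $1$ for small $t\neq0$. Hence $I(f)=0$.

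The only delicate point is bookkeeping of conventions: whether $f\circ(uv)$ or $f\circ(vu)$ appears, and the direction of $\eta$ versus $\eta^{-1}$ in (b). Bi-invariance of Haar measure on the compact group $SU(n)$ removes any dependence on the first. For the second, I would check the identity on a single transposition acting on a two-letter word before writing out the general case.
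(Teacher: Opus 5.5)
Your proof is correct and follows essentially the same route as the paper. Parts (a) and (b) are the $SU(2)$ arguments (invariance of Haar measure, and letterwise substitution commuting with position permutations) carried over to $SU(n)$, and (c) uses the same torus element with entries $e^{it}$ and $e^{-it}$ in positions $a,b$ together with left invariance; your check of the convention $(f\circ u)\circ v=f\circ(uv)$ and the tensor-factor remark in (b) spell out details the paper leaves to the earlier lemmas.
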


\begin{proof}
(a) is the computation following Definition~\ref{def:avgI}, with $SU(2)$
replaced by $SU(n)$. For (b), the proof of Lemma~\ref{etaI} expands
$f\circ u$ by substituting each letter separately, and this works for any
$u\in SU(n)$.

For (c), argue as in Lemma~\ref{Ikernel}. Let $d_j$ be the degree of $X_j$
in $f$, and choose $a,b$ with $d_a\neq d_b$. Take
$v\in SD(n)$ with $v_{aa}=e^{it}$, $v_{bb}=e^{-it}$ and all other diagonal
entries equal to $1$. Then $f\circ v=e^{it(d_a-d_b)}f$, and left invariance of
$dh$ gives
\[
  I(f)=I(f\circ v)=e^{it(d_a-d_b)}I(f).
\]
Choosing $t$ with $e^{it(d_a-d_b)}\neq1$ gives $I(f)=0$.
\end{proof}

By (a) and (c), $\mathbb C^{SU(n)}_d\langle X^{(n)}\rangle$ is the image under
$I$ of the span of the words in $\mathbb C^{SD(n)}_d\langle X^{(n)}\rangle$. By
Theorem~\ref{balancedegree}, there are no such words unless $n\mid d$.

\begin{corollary}\label{cor:SUn-degree}
$\mathbb C^{SU(n)}_d\langle X^{(n)}\rangle=0$ unless $n$ divides $d$.
\end{corollary}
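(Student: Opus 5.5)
The plan is to combine Lemma~\ref{lem:SUn-I} with Theorem~\ref{balancedegree}. First I reduce to homogeneous polynomials. The $SU(n)$ action is linear in the variables, so it preserves degree. Hence every invariant splits into homogeneous components, and each component is again invariant. It therefore suffices to fix $d$ and take $f\in\mathbb C^{SU(n)}_d\langle X^{(n)}\rangle$.

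By Lemma~\ref{lem:SUn-I}(a), $f=I(f)$. Write $f=\sum_w c_w w$ as a linear combination of words of length $d$. Linearity of the Haar integral gives $f=\sum_w c_w I(w)$. By Lemma~\ref{lem:SUn-I}(c), $I(w)=0$ whenever $w\notin\mathbb C^{SD(n)}\langle X^{(n)}\rangle$, that is, whenever the letters $X_1,\dots,X_n$ do not all occur equally often in $w$. So only balanced words contribute, and
\[
  f=\sum_{w\ \text{balanced}} c_w I(w).
\]

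Next I count letters. If a word $w$ of length $d$ is balanced, each $X_j$ occurs some common number $m$ of times. Summing over $j=1,\dots,n$ gives $d=nm$. So when $n\nmid d$ there are no balanced words of length $d$. This is exactly the content of Theorem~\ref{balancedegree} restricted to degree $d$. In that case the sum above is empty and $f=0$, which proves the corollary.

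The only point needing care is Lemma~\ref{lem:SUn-I}(c). There one must choose $t$ with $e^{it(d_a-d_b)}\neq 1$, which is possible precisely because $d_a\neq d_b$. One must also check that the chosen $v$ lies in $SU(n)$: it does, since its determinant is $e^{it}e^{-it}=1$. Both points are already handled in that lemma, so I expect no real obstacle.
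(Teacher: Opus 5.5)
Your proposal is correct and follows the paper's argument. The paper likewise uses Lemma~\ref{lem:SUn-I}(a),(c) to write every degree-$d$ invariant as $I$ applied to a combination of balanced words, and then uses Theorem~\ref{balancedegree} to note that such words exist only when $n\mid d$; your count $d=nm$ just spells out that step, and your reduction to homogeneous components is harmless but unnecessary, since $\mathbb C^{SU(n)}_d\langle X^{(n)}\rangle$ already denotes the homogeneous degree-$d$ part.
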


\begin{lemma}\label{lem:Dn-inv}
$D_n\circ u=D_n$ for every $u\in SU(n)$. Consequently
$\mathcal O_k\subseteq\mathbb C^{SU(n)}_{kn}\langle X^{(n)}\rangle$ for all $k\geq0$.
\end{lemma}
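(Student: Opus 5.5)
The plan is to prove the invariance $D_n\circ u=D_n$ by expanding $D_n\circ u$ multilinearly and recognizing a determinant. The inclusion $\mathcal O_k\subseteq\mathbb C^{SU(n)}_{kn}\langle X^{(n)}\rangle$ will then follow from multiplicativity of substitution together with Lemma~\ref{lem:SUn-I}(b).

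\textbf{Step 1: invariance of $D_n$.} Write $u\cdot X_i=\sum_j u_{ij}X_j$. With the convention of Lemma~\ref{etaI}, $\eta\curvearrowright(X_1\cdots X_n)=X_{\eta^{-1}(1)}\cdots X_{\eta^{-1}(n)}$, so reindexing by $\sigma=\eta^{-1}$ (which has the same sign) gives
\[
D_n=\sum_{\sigma\in S_n}\operatorname{sgn}(\sigma)\,X_{\sigma(1)}\cdots X_{\sigma(n)}.
\]
Substituting and expanding each factor separately,
\[
D_n\circ u=\sum_{\sigma}\operatorname{sgn}(\sigma)\sum_{j_1,\dots,j_n}u_{\sigma(1)j_1}\cdots u_{\sigma(n)j_n}\,X_{j_1}\cdots X_{j_n}.
\]
Interchanging the sums, the coefficient of the word $X_{j_1}\cdots X_{j_n}$ is $\det\bigl(u_{\sigma\text{-row},\,j_p}\bigr)$, namely the determinant of the $n\times n$ matrix whose $p$-th column is the $j_p$-th column of $u$.

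\textbf{Step 2: evaluating the coefficients.} If two of the indices $j_p$ coincide, the matrix has a repeated column and the coefficient vanishes. Otherwise $j_p=\tau(p)$ for some $\tau\in S_n$, and permuting the columns gives $\operatorname{sgn}(\tau)\det u=\operatorname{sgn}(\tau)$, since $\det u=1$ for $u\in SU(n)$. Hence
\[
D_n\circ u=\sum_\tau\operatorname{sgn}(\tau)X_{\tau(1)}\cdots X_{\tau(n)}=D_n.
\]
The only real obstacle is this sign and index bookkeeping. It is routine because the map $\sigma\mapsto\sigma^{-1}$ preserves the sign, so the choice of convention does not matter.

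\textbf{Step 3: the inclusion $\mathcal O_k\subseteq\mathbb C^{SU(n)}_{kn}\langle X^{(n)}\rangle$.} Substitution $f\mapsto f\circ u$ is an algebra homomorphism, because it is defined by replacing each letter. Therefore $D_n^k\circ u=(D_n\circ u)^k=D_n^k$. For $\mu\in S_{kn}$, Lemma~\ref{lem:SUn-I}(b) gives
\[
(\mu\curvearrowright D_n^k)\circ u=\mu\curvearrowright(D_n^k\circ u)=\mu\curvearrowright D_n^k,
\]
so every spanning element of $\mathcal O_k$ is invariant. Invariance is preserved under linear combinations, so it extends to all of $\mathcal O_k$. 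The case $k=0$ is the constants, which are trivially invariant.
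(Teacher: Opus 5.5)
Your proposal is correct and follows the paper's proof essentially step for step: you expand $D_n\circ u$ multilinearly, identify the coefficient of $X_{j_1}\cdots X_{j_n}$ as the determinant of the matrix whose $p$-th column is the $j_p$-th column of $u$, and then use multiplicativity of substitution together with Lemma~\ref{lem:SUn-I}(b). The only cosmetic difference is in the last step: you apply the pointwise identity $(\mu\curvearrowright f)\circ u=\mu\curvearrowright(f\circ u)$ directly, whereas the paper goes through $I(\mu\curvearrowright D_n^k)=\mu\curvearrowright D_n^k$ using parts (a) and (b).
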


\begin{proof}
Substituting $X_a\mapsto\sum_bu_{ab}X_b$ gives
\[
  D_n\circ u=\sum_{j_1,\dots,j_n=1}^{n}
  \Big(\sum_{\eta\in S_n}\operatorname{sgn}(\eta)\prod_{t=1}^nu_{\eta(t)\,j_t}\Big)
  X_{j_1}\cdots X_{j_n}.
\]
The coefficient of $X_{j_1}\cdots X_{j_n}$ is the determinant of the matrix
whose $t$-th column is the $j_t$-th column of $u$. It is $0$ if two of the $j_t$
coincide. If $(j_1,\dots,j_n)$ is a permutation $\rho$, it equals
$\operatorname{sgn}(\rho)\det u=\operatorname{sgn}(\rho)$. Hence $D_n\circ u=D_n$.

Since $f\mapsto f\circ u$ is multiplicative, $D_n^{\,k}$ is invariant. By
Lemma~\ref{lem:SUn-I}(a),(b),
$I(\mu\curvearrowright D_n^{\,k})=\mu\curvearrowright D_n^{\,k}$, so every
$\mu\curvearrowright D_n^{\,k}$ is invariant as well.
\end{proof}

\begin{theorem}\label{thm:SUn-degree-n}
Up to scalar multiplication, $D_n$ is the only $SU(n)$-invariant homogeneous
nc polynomial of degree $n$:
\[
  \mathbb C^{SU(n)}_n\langle X^{(n)}\rangle=\operatorname{span}\{D_n\}.
\]
\end{theorem}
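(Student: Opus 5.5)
By Lemma~\ref{lem:Dn-inv} we already have $\operatorname{span}\{D_n\}\subseteq\mathbb C^{SU(n)}_n\langle X^{(n)}\rangle$, so only the reverse inclusion needs proof. The plan is to use the averaging operator $I$ as in Theorem~\ref{part1}: every invariant $F$ of degree $n$ satisfies $F=I(F)$. By Lemma~\ref{lem:SUn-I}(c), $I$ kills every word that is not in $\mathbb C^{SD(n)}$. In degree $n$, the words of balanced degree (Theorem~\ref{balancedegree}) are exactly those in which each $X_j$ occurs once, that is, the words $\rho\curvearrowright(X_1\cdots X_n)$ for $\rho\in S_n$. Hence
\[
\mathbb C^{SU(n)}_n\langle X^{(n)}\rangle=\operatorname{span}\{I(\rho\curvearrowright(X_1X_2\cdots X_n))\mid \rho\in S_n\}.
\]
By Lemma~\ref{lem:SUn-I}(b), $I(\rho\curvearrowright w)=\rho\curvearrowright I(w)$ with $w=X_1X_2\cdots X_n$. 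So it suffices to show that $I(w)=c\,D_n$ for some scalar $c$. Indeed, $\rho\curvearrowright D_n=\operatorname{sgn}(\rho)D_n$ by reindexing the sum defining $D_n$, and then every generator lies in $\operatorname{span}\{D_n\}$.

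To show $I(w)\in\operatorname{span}\{D_n\}$, first note that $I(w)$ is $SU(n)$-invariant, so it is also $SD(n)$-invariant and hence a combination $\sum_{\rho}c_\rho\,\rho\curvearrowright w$. The goal is to show $c_\rho=\operatorname{sgn}(\rho)c_{\mathrm{id}}$. It is enough to do this for adjacent transpositions $\tau=(a\;a{+}1)$ of letters, i.e.\ to show that $F:=I(w)$ satisfies $F(\dots,X_a,X_{a+1},\dots)=-F(\dots,X_{a+1},X_a,\dots)$ after relabeling variables. I would do this with a single group element rather than the Haar integral. Take $g\in SU(n)$ acting as $X_a\mapsto X_{a+1}$, $X_{a+1}\mapsto -X_a$, and identity elsewhere; this has determinant $1$. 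Invariance gives $F\circ g=F$. Now compare coefficients. Since $F$ is multilinear in the variables (each appears once in every word), the coefficient of $\sigma\curvearrowright w$ in $F\circ g$ is $-c_{\sigma'}$, where $\sigma'$ is obtained from $\sigma$ by swapping the roles of the letters $X_a$ and $X_{a+1}$. Equating $F\circ g=F$ yields $c_{\sigma'}=-c_\sigma$. Since such letter swaps generate all relabelings, $c_\rho=\operatorname{sgn}(\rho)c_{\mathrm{id}}$, and so $F=c_{\mathrm{id}}D_n$.

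This argument in fact never needed $I$ beyond reducing to multilinear words. Any invariant $F$ of degree $n$ is $SD(n)$-invariant, hence multilinear, and the same signed-permutation argument applied directly to $F$ shows $F\in\operatorname{span}\{D_n\}$. I would present it that way, in three steps: $SD(n)$-invariance forces multilinearity via Theorem~\ref{balancedegree} or Lemma~\ref{lem:SUn-I}(c); signed transposition matrices in $SU(n)$ force the coefficients to be alternating; and the resulting alternating multilinear sum is $c\,D_n$. The main obstacle is keeping the bookkeeping straight between permuting positions, which is the $\curvearrowright$ action, and permuting letters, which is the action of $g$. Concretely, one must check that relabeling letters by $\tau$ sends $\sigma\curvearrowright w$ to the word $(\sigma\tau^{\pm1})\curvearrowright w$ up to the convention, so that the sign relation propagates to $c_\rho=\operatorname{sgn}(\rho)c_{\mathrm{id}}$ regardless of which side the transposition acts on. Since $\operatorname{sgn}$ is a homomorphism, either convention gives the same conclusion.
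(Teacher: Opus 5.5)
Your proposal is correct and is essentially the paper's proof: multilinearity from $SD(n)$-invariance (Lemma~\ref{lem:SUn-I}(c)), then the single element of $SU(n)$ acting by $X_a\mapsto X_{a+1}$, $X_{a+1}\mapsto -X_a$, which forces $c_{(a,\,a+1)\sigma}=-c_\sigma$, and finally the fact that adjacent transpositions generate $S_n$. The only difference is that you skip the paper's normalization $\tilde f=c^{-1}\sigma_0\curvearrowright f$; it is indeed unnecessary, since the sign relation gives $F=c_{\mathrm{id}}D_n$ directly, including when $c_{\mathrm{id}}=0$.
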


\begin{proof}
By Lemma~\ref{lem:Dn-inv}, $D_n$ is invariant. Let
$f\in\mathbb C^{SU(n)}_n\langle X^{(n)}\rangle$ be nonzero. By
Lemma~\ref{lem:SUn-I}(a),(c), $f$ is a combination of words in
$\mathbb C^{SD(n)}_n\langle X^{(n)}\rangle$, that is, words in which each of
$X_1,\dots,X_n$ appears exactly once.

\emph{Normalization.} Pick a word $X_{\sigma_0(1)}\cdots X_{\sigma_0(n)}$ with
nonzero coefficient $c$ in $f$. With the convention above,
$\sigma_0\curvearrowright(X_{\sigma_0(1)}\cdots X_{\sigma_0(n)})=X_1X_2\cdots X_n$.
Put
\[
  \tilde f:=c^{-1}\,\sigma_0\curvearrowright f .
\]
By Lemma~\ref{lem:SUn-I}(a),(b), $\tilde f$ is again $SU(n)$-invariant, and the
coefficient of $X_1X_2\cdots X_n$ in $\tilde f$ is $1$. For $\sigma\in S_n$ write
$X_\sigma:=X_{\sigma(1)}X_{\sigma(2)}\cdots X_{\sigma(n)}$, so that
$X_{\mathrm{id}}=X_1X_2\cdots X_n$, and
\[
  \tilde f=\sum_{\sigma\in S_n}c_\sigma X_\sigma,
  \qquad c_{\mathrm{id}}=1 .
\]
In Steps 1--3 we show that $c_\sigma=\operatorname{sgn}(\sigma)$ for every
$\sigma$, that is, $\tilde f=D_n$.

Since $\eta\curvearrowright D_n=\operatorname{sgn}(\eta)D_n$ for every $\eta\in S_n$,
it then follows that
\[
  f=c\,\sigma_0^{-1}\curvearrowright D_n=c\operatorname{sgn}(\sigma_0)\,D_n .
\]

\emph{Step 1: a subgroup $SU(2)\oplus I$.} For $1\leq a\leq n-1$ let
\[
  H_a:=\{\,I_{a-1}\oplus v\oplus I_{n-a-1}\mid v\in SU(2)\,\}\subseteq SU(n).
\]
An element of $H_a$ fixes $X_c$ for $c\neq a,a+1$, and acts on $(X_a,X_{a+1})$
exactly as $v\in SU(2)$ acts on $(X,Y)$. Since $\tilde f$ is $SU(n)$-invariant, it
is in particular $H_a$-invariant.

\emph{Step 2: $c_{(a,\,a+1)\sigma}=-c_\sigma$.} Let
\[
  w_a:=I_{a-1}\oplus\begin{pmatrix}0&1\\-1&0\end{pmatrix}\oplus I_{n-a-1}\in H_a\subseteq SU(n).
\]
Then $w_a$ acts by
\[
  X_a\mapsto X_{a+1},\qquad X_{a+1}\mapsto -X_a,\qquad X_c\mapsto X_c\ (c\neq a,a+1).
\]
Every word $X_\sigma$ contains each of $X_a$ and $X_{a+1}$ exactly once. Hence
$X_\sigma\circ w_a=-X_{(a,\,a+1)\sigma}$, where $X_{(a,\,a+1)\sigma}$ is the word
obtained from $X_\sigma$ by exchanging the letters $X_a$ and $X_{a+1}$. So
\[
  \tilde f\circ w_a=-\sum_{\sigma\in S_n}c_\sigma X_{(a,\,a+1)\sigma} .
\]
That is, $\tilde f\circ w_a$ is $-\tilde f$ with $X_a$ and $X_{a+1}$
interchanged. Since $\tilde f$ is invariant, $\tilde f\circ w_a=\tilde f$, so
$\tilde f$ is antisymmetric under the exchange of $X_a$ and $X_{a+1}$. Comparing
the coefficients of $X_{(a,\,a+1)\sigma}$ gives
\[
  c_{(a,\,a+1)\sigma}=-c_\sigma .
\]

\emph{Step 3: from $X_{\mathrm{id}}$ to $X_\sigma$.} The adjacent transpositions
$(a,\,a+1)$, $1\leq a\leq n-1$, generate $S_n$. So every $\sigma\in S_n$ can be
written as
\[
  \sigma=(a_m,\,a_m+1)\cdots(a_2,\,a_2+1)(a_1,\,a_1+1),
\]
and $X_\sigma$ is reached from $X_{\mathrm{id}}$ by $m$ successive swaps of two
adjacent letters. Applying Step~2 at each swap gives
\[
  c_\sigma=(-1)^m c_{\mathrm{id}}=(-1)^m=\operatorname{sgn}(\sigma),
\]
since $\operatorname{sgn}$ is a homomorphism with $\operatorname{sgn}\big((a,\,a+1)\big)=-1$.
In particular, the value obtained does not depend on the chosen path from
$\mathrm{id}$ to $\sigma$.
\end{proof}

We conjecture that the pattern of Theorem~\ref{conjSU2} persists for all $n$.

\begin{conjecture}\label{conj:SUn}
For all $n\geq2$ and $k\geq0$,
\[
  \mathbb C^{SU(n)}_{kn}\langle X^{(n)}\rangle
  =\operatorname{span}\{\mu\curvearrowright D_n^{\,k}\mid\mu\in S_{kn}\}=\mathcal O_k .
\]
\end{conjecture}

By Lemma~\ref{lem:Dn-inv}, only the inclusion $\subseteq$ is at stake. The
conjecture holds for $n=2$ (Theorem~\ref{conjSU2}) and for $k=1$
(Theorem~\ref{thm:SUn-degree-n}).  The point of what
follows is to approach it by the Haar-integral method of Section~3.5 and to find a explicit basis that suggest a realization formula.

    \section{Summary}\label{sec4}
        We studied noncommutative polynomials invariant under infinite (Lie) group actions on two and several noncommuting variables. After showing (Theorem 2.1) that the algebraic classification of invariant nc polynomials automatically controls invariant free holomorphic functions nonvanishing at the origin, we developed the algebraic theory case by case. 
        
        For the torus $SD(n)$ (\S3.1) and $SO(2)$ (\S3.2), we gave an explicit description in terms of the degree of each variable and the eigenfunctions $X\pm iY$ respectively, together with a closed dimension formula. Using the matrix decomposition $U=ADB$ with $A,B\in SO(n)$ and $D\in SD(n)$ (\S3.3), we showed $\mathbb{C}^{SU(n)}=\mathbb{C}^{SO(n)}\cap\mathbb{C}^{SD(n)}$, reducing the study of $SU(n)$-invariants to these two simpler group actions. 
        
        For $SU(2)$ (\S\S3.4--3.5), we gave a complete classification: the degree-$2n$ invariant polynomials are exactly the $S_{2n}$-orbit of $(XY-YX)^n$ (Theorem~\ref{conjSU2}), with an explicit recursive basis $P_n$ (Theorem~\ref{part2}) whose cardinality is the $n$-th Catalan number, and odd-degree invariants vanish. 

        Several directions remain open. The most immediate is our conjecture (\S3.6) that for general $SU(n)$, the degree-$kn$ invariants are spanned by the $S_{kn}$-orbit of the $k$-th power of the unique degree-$n$ invariant $\sum_{\eta\in S_n}\mathrm{sgn}(\eta)\,\eta\curvearrowright(X_1\cdots X_n)$ as well as the dimension counting. This problem, unlike the $SU(2)$ case, remain to be established by the Haar-integral method for $n>2$ for both the spanning statement and an explicit linearly independent basis . 
        
        A second direction is to extend the present results from the polynomial ring to the skew field of rational noncommutative functions and check if it is finitely generated in any sense, in the spirit of Agler and Young's finite-basis result for the finite-group case \cite{2vsym} and its subsequent generalizations \cite{nvsym,nvsymc,ncsymext}. Theorem 2.1 already supplies part of the analytic bridge such an extension would need. 
        
        Finally, it would be of interest to carry out the Haar-integral averaging combined with a matrix decomposition into simpler subgroups methodology for other classical Lie groups, such as $Sp(n)$ and $O(n)$, acting on noncommuting variables.

    \section*{AI usage declaration}
    AI(Claude Sonnet 5) is used for section \ref{sec1.3}: Link with known results and section \ref{sec4}: Summary. The author also used AI for language support on the entire paper. None of the theorems is proved with assistance from AI.
\printbibliography
\end{document}